\newtheorem{thm}{Theorem}[section]
\newtheorem{lem}[thm]{Lemma}
\newtheorem{prop}[thm]{Proposition}
\newtheorem{cor}[thm]{Corollary}
\newtheorem{assu-nota}[thm]{Assumption--Notation}
\newtheorem{ex}{Example}[section]
\theoremstyle{remark}
\newtheorem{remark}{Remark}
\newtheorem{assu}[remark]{Assumption}
\newcommand{\Q}{\mathbb Q}
\newcommand{\pp}{\mathbb P}
\newcommand{\OO}{\mathcal{O}}
\numberwithin{equation}{section}
\def\Qed{\hfill\raisebox{.6ex}{\framebox[2.5mm]{}}\\[.15in]}
\title [ Surfaces with $K^2=2\mathcal{X}-2$ and $p_g\geq 5$ ]
{ Surfaces with $K^2=2\mathcal{X}-2$ and $p_g\geq 5$
}
\author{ Mar{\'i}a Mart\'\i \ S{\'a}nchez}
\thanks{\parbox[t]{10cm}{Mar\'ia Mart\'i S\'anchez\\
Center for Mathematical Analysis, Geometry and Dynamical Systems\\
Instituto Superior T\'ecnico,
Universidade T{\'e}cnica de Lisboa\\
Av.~Rovisco Pais,
1049-001 Lisboa, PORTUGAL\\
Tel.: 0034-91-2219775\\
Fax: 0034-91-7545083\\
E-mail: mmartisanchez@educa.madrid.org
} \hfill }
\begin{document}

\begin{abstract} This note describes minimal surfaces $S$ of general type satis-
fying $p_g\geq 5$ and $K^2=2p_g$. For $p_g\geq 8$ the canonical map
of such surfaces is generically finite of degree $2$ and the bulk of the
paper is a complete
characterization of such surfaces with non birational canonical map. It
turns out that if $p_g\geq 13$, $S$ has always an (unique) genus 2
fibration, whose non 2-connected fibres can be characterized, whilst for
$p_g\leq 12$ there are two other classes of such surfaces with non
birational canonical map.
\medskip

\noindent \textbf{Keywords:}
\begin{tabular}[t]{l}
 Regular surfaces of general type\\
Canonical map \\
Involutions and double covers\\
Rational surfaces\\
Small  invariants\\
\end{tabular}

\

\noindent{\em 2000 Mathematics Subject Classification:} 14J29
\end{abstract}
\maketitle

\section{Introduction}

Noether's well known inequality states that a  minimal surface of general type satisfies $K^2 \geq 2\chi-6$. Surfaces with $K^2< 2\chi$ are always regular and Horikawa completely classified  minimal surfaces satisfying $K^2=2\chi-6,  2\chi-5$ and $2\chi-4$ (\cite{hoI}, \cite{ho4}, \cite{ho1}, \cite{hoIV}). Some aspects of surfaces with $K^2=2\chi-3$ have been studied by other authors (e.g. \cite{Xian}).

In this paper we characterize  minimal surfaces satisfying $K^2=2\chi-2$ and $p_g\geq 5$.
  Note that the case $p_g=4$ has been studied in \cite{ba-pig}.  Let us point out that with our methods we could also recover the classification of \cite{ba-pig}.

We start  with an overview of the case.  From the results of \cite{ho1} the canonical map is not composed with a pencil. Also, by \cite{men-par}, the canonical map has  always degree $\leq 2$. If the canonical map is birational, then $p_g \leq 7$.
The bulk of our analysis is the case when the canonical map has degree 2.   In this   case  the canonical image is always a  rational surface and  we consider the number  $t$ of isolated  fixed points of the involution induced by the canonical map.   The main results obtained are:

\begin{itemize} \item {\it Theorem \ref{class}}: Let $S$ be a minimal surface with $K_S^2=2\chi-2$ and $p_g\geq 5$. Then $S$ satisfies exactly one of the following:
\begin{enumerate}
\item [(I)] the canonical map $\phi_{K_S}$ is birational and
    \begin{enumerate}
    \item[(Ia)] $|K_S|$ is free from base points and $p_g\leq 7$, or
    \item[(Ib)] $|K_S|$ has exactly one (simple) base point and $p_g=5$;
    \end{enumerate}
\item [(II)] the canonical map factors through an involution $i$ and the number $t$ of isolated fixed points of $i$ is:
    \begin{enumerate}
    \item[(IIa)] $t=0$, or
    \item[(IIb)] $t=2$, or
    \item[(IIc)] $t=4$.
    \end{enumerate}
\end{enumerate}

 Furthermore

    \item {\it Proposition \ref{thm_t=0}}: If $t=0$, then $p_g\leq12$ and
 $S$ is the minimal resolution of a double cover of $\mathbb{F}_r$, $r\leq 3$, branched on a curve in $|8C_0 + 2(5+2r)f|$  having $12-p_g$ singular points of multiplicity 4 as only essential singularities.
    \item {\it Theorem \ref{thm_t=2}}: If $t=2$, then $p_g\leq 8$ and one of the following occurs:
        \begin{enumerate}
            \item $S$ is the minimal resolution of a double cover of a {\it weak Del Pezzo} surface $T$ of degree $p_g+1$ branched on an effective divisor  in $|-4K_T|$ having exactly two (3,3)-points as essential singularities.

            \item  $S$ is the minimal resolution of a double cover of $\mathbb{F}_r$, $r\leq 2$,
               whose branch curve is the union of a curve in $|8C_0 + (9+4r)f)|$ with a fibre. The curve has $8-p_g$ singular points of multiplicity 4 and another of type $(4,4)$ and the fibre is tangent to the curve at the $(4,4)$-point.
        \end{enumerate}
    \item If $t=4$, the surface has a unique genus 2 pencil and in {\it Proposition \ref{prop-genus2}}  we see the different  possibilities for the  singularities of the branch locus.

\end{itemize}

All these types of surfaces, except possibly type $Ib$, do exist. For surfaces of type $I$ we refer to Remark \ref{type_I}  and Proposition \ref{type_Ib}.
Surfaces of type $IIa, IIb$ and $IIc$ are easily seen to exist using the descriptions as double covers given in Proposition \ref{thm_t=0}, Theorem \ref{thm_t=2} and Proposition \ref{prop-genus2}.

\bigskip The paper is organized as follows. In Section 2 some general properties of involutions are recalled. In Section 3  the canonical map of these surfaces is studied,  yielding a first division into cases. In the remaining sections each of these cases is described.

\

\noindent {\it Notation.}
 We work over the complex numbers. All varieties are projective algebraic. All the notation we use is standard in algebraic geometry.
We just recall the definition of the numerical invariants of a smooth surface $X$:  the self-intersection number $K^2_X$ of the canonical divisor $K_X$, the {\em geometric genus} $p_g(X):=h^0(K_X)=h^2(\OO_X)$, the {\em irregularity} $q(X):=h^0(\Omega^1_X)=h^1(\OO_X)$ and the {\em holomorphic Euler characteristic} $\chi(X):=1+p_g(X)-q(X)$.


An {\em involution} of a surface $S$ is an
automorphism of $S$ of order 2.
We say that a curve
singularity is {\em nonessential} if it is either a double point or a
triple point which resolves to at most a double point after one
blow-up.
Other curve singularities are said to be {\em essential}.
A $(m,k)-point$ of a curve is a point of multiplicity $m$, which resolves to an ordinary point of multiplicity $k$ after one blow-up.
We say that a map is {\em composed with
an involution} $i$ of $S$ if it factors through the double cover $S\rightarrow
S/i.$

We do not distinguish between line bundles and divisors on a smooth
variety. Linear equivalence is denoted by $\equiv$ and numerical equivalence by $\sim$.


\

\noindent {\it Acknowledgments.}

I would like to thank prof. M. Mendes
Lopes for her constant help in this work. Also, I wish to thank the referee for his/her careful reading and for
all the suggestions that improved substantially the presentation of this
paper.

The author is a collaborator of the Center for Mathematical Analysis, Geometry
and Dynamical Systems of Instituto Superior T´ecnico, Universidade
T´ecnica de Lisboa and was partially supported by FCT (Portugal) through program POCTI/FEDER,  Project
POCTI/MAT/44068/2002 and the doctoral grant SFRH/BD/17596/2004.

\

\section{Involutions on surfaces}

\

Let $S$ be a minimal surface of general type. Given an involution $i$ on $S$, its fixed locus is the union of a smooth curve $R$ (possibly empty) and of $t\geq 0$ isolated points $P_1,...,P_t$.
Let $\pi ': S \to S/i$ be the quotient map and set $B'':=\pi'(R)$. The surface $S/i$ is normal and $Q_1:=\pi '(P_1), ..., Q_t:=\pi '(P_t)$ are ordinary double points, which are the only singularities of $S/i$. Resolving these singularities we get a commutative diagram
\begin{equation}\label{diagrama_inv}
\begin{CD}\ V@>h>>S\\ @V\pi VV  @VV \pi' V\\ W@>g >> S/i
\end{CD}
\end{equation}
where $g$ is the minimal desingularization map,  $h$ is the blow up of $S$ at $P_1,...,P_t$ and $V$ is obtained by base change and normalization. Notice that  the curves $A_i:=g^{-1}(Q_i)$ are $(-2)$-curves. Setting $B:=g^*(B'')$, $\pi$ is a double cover whose branch locus $B'$ is given by:
$$2L\equiv B':= B + \sum _1^t A_i.$$

We recall the well known formulas (cf. \cite{bpv}, Chapter V, Section 22):
\begin{equation}\label{eq-involutions1}
K_S^2 -t=K_V^2=2(K_W+L)^2,
\end{equation}
\begin{equation}\label{eq-involutions2}
\mathcal{X}(\mathcal{O}_S)=\mathcal{X}(\mathcal{O}_V)=2\mathcal{X}(\mathcal{O}_W) + \frac{1}{2}L(K_W+L).
\end{equation}

Since   $\pi^*(2K_W+B)=h^*(2K_S)$ and $S$ is a minimal surface of general type, $2K_W+B$ is a nef and big divisor and (see
\cite{ca-ci-men}, \cite{men-par2}, also (\cite{ri1})):

\begin{equation}\label{lema_involution}
 (2K_W + B)^2=2K_S^2,
 \end{equation}
\begin{equation}\label{lema_involutioni}
K_W(K_W+L)=\frac{1}{2}K_W(2K_W+B)= \frac{1}{2}(K_S^2-t)  - 2\mathcal{X}(\mathcal{O}_S)+4\mathcal{X}(\mathcal{O}_W),
 \end{equation}
\begin{equation}\label{lema_involutioniii}
h^1(2K_W+L)=h^2(2K_W+L)= 0,
\end{equation}
\begin{equation}\label{eq-involutions-rito}
 t= K_S^2 + 6\mathcal{X}(\mathcal{O}_W) -2\mathcal{X}(\mathcal{O}_S)-2h^0(W,\mathcal{O}_W(2K_W + L)),
 \end{equation}

Note that  the bicanonical map of $S$ factors through $i$ if and only if $h^0(W,\mathcal{O}_W(2K_W + L))=0$ (see e.g. proof of Proposition 2.1 of \cite{men-par2}).

\medskip

It will be important in what follows to study the divisor $3K_W+B$. This divisor is not necessarily nef but,
as shown in  Proposition 3.9 of \cite{ca-ci-men}, it is possible to assume it is nef.

  More precisely, from this proposition and its proof we obtain:

\begin{prop}[\rm \cite{ca-ci-men}]\label{prop_minimalmodel}
Suppose $h^0(3K_W+B)\neq 0$. There exists a birational morphism $p:W \to P$ where $P$ is a smooth surface and an effective divisor $\bar B$ on $P$ with the following properties:
\begin{itemize}
\item there are $t$ $(-2)$-curves $C_i$ on $P$ such that $p^*(C_i)=A_i$, $i=1,...,t$ and $\bar B$ is disjoint from the union of the curves $C_i$;
\item  there is  $\bar L$ in Pic $(P)$ such that $\bar B+\sum_1^t C_i=2\bar L=\bar{B}'$ and $p^*(K_P+\bar L)=K_W+L $;
\item the double cover $\bar V$ of $P$   defined by $\bar B+\sum_1^t  C_i=2\bar L$ is a surface with at most Du Val singularities and such that
 $V$ is the minimal desingularization of $\bar V$;
\item $p^*(2K_P+\bar B)=2K_W+B$;
\item  $3K_P + \bar B$ is nef.
\end{itemize}
\end{prop}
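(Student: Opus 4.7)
The plan is to construct $p\colon W\to P$ as a finite composition of Castelnuovo contractions, chosen so that at each stage the divisor class playing the role of $3K+B$ is made ``less negative,'' while the double cover data descends at each step. If $3K_W+B$ is already nef, take $P:=W$, $\bar B:=B$, $\bar L:=L$, $C_i:=A_i$, and everything to prove is vacuous. Otherwise, it suffices to produce one blow-down $q\colon W\to W_1$ compatible with the branch data and to iterate.

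For the inductive step, suppose $(3K_W+B)\cdot\Gamma<0$ for some irreducible curve $\Gamma$. Since $\pi^*(2K_W+B)=h^*(2K_S)$ and $K_S$ is nef, $2K_W+B$ is nef, so $(2K_W+B)\cdot\Gamma\ge 0$. Subtracting gives $K_W\cdot\Gamma<0$, and adjunction together with the numerical bounds $B\cdot\Gamma\ge -2K_W\cdot\Gamma$ and $B\cdot\Gamma<-3K_W\cdot\Gamma$ forces $K_W\cdot\Gamma=-1$, $\Gamma^2=-1$, and $B\cdot\Gamma=2$. Thus $\Gamma$ is a $(-1)$-curve meeting $B$ in two points. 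One next checks that $\Gamma\cdot A_i=0$ for every $i$. Numerically $(3K_W+B)\cdot A_i=0$ (using $K_W\cdot A_i=0$ and $B\cdot A_i=0$, the latter because $B=g^*B''$ and $g(A_i)$ is a point), so the $A_i$ themselves are not among the bad curves. If $\Gamma$ met some $A_{i_0}$, then the parity constraint $B'\cdot\Gamma\in 2\mathbb Z$ forces $\sum_i\Gamma\cdot A_i$ to be even, and pulling $\Gamma+\sum_{\Gamma\cdot A_i>0}A_i$ back to $V$ and pushing forward to $S$ by $h$ yields a configuration that contradicts the minimality of $S$ together with the fact that the only $h$-exceptional curves are the disjoint $(-1)$-curves over the points $P_j$.

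Having $\Gamma$ disjoint from all $A_i$, Castelnuovo's criterion provides $q\colon W\to W_1$ contracting $\Gamma$. Define $B_1:=q_*B$, $L_1:=q_*L$; then $q_*A_i$ remains a $(-2)$-curve on $W_1$ (because $A_i\cdot\Gamma=0$), and $B_1+\sum q_*A_i\equiv 2L_1$ since $B'\cdot\Gamma=2$ is even, so the double cover class descends. The associated double cover $V_1\to W_1$ acquires at worst a Du Val singularity over $q(\Gamma)$ (reflecting the node into which the two transverse branches of $B$ along $\Gamma$ collapse), and $V$ is its minimal resolution. Replacing $(W,B,L)$ by $(W_1,B_1,L_1)$ and iterating, the process terminates because $K_W^2$ strictly increases at every step while remaining bounded above by $\tfrac12(2K_W+B)^2=K_S^2$ via the Hodge index theorem applied to the nef and big class $2K_W+B$. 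The resulting smooth surface $P$, with $\bar B$ the final pushforward of $B$, the final $C_i$ the images of the $A_i$, and $\bar L$ defined by $2\bar L\equiv\bar B+\sum C_i$, satisfies all five bullets in the conclusion.

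The delicate step is the disjointness $\Gamma\cdot A_i=0$: the numerical shape of $\Gamma$ and the contraction itself are standard MMP-on-a-pair manipulations, but ruling out a $(-1)$-curve crossing one of the exceptional $(-2)$-curves requires genuinely using the geometry of the double cover $\pi$ and the minimality of $S$, rather than just intersection-theoretic identities on $W$.
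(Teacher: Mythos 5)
Your strategy is the right one, and it essentially reconstructs the argument that the paper does not reproduce but merely cites: the paper's own proof of Proposition \ref{prop_minimalmodel} is a one\--line reference to Proposition 3.9 of \cite{ca-ci-men}, whose proof proceeds exactly as you propose, by repeatedly contracting $(-1)$-curves $\Gamma$ with $K_W\Gamma=-1$, $B\Gamma=2$ and $\Gamma A_i=0$. Your determination of the numerical type of $\Gamma$ is correct, provided you say where $\Gamma^2<0$ comes from: an irreducible curve with $\Gamma^2\geq 0$ is nef, hence meets the \emph{effective} divisor $3K_W+B$ non-negatively, so the hypothesis $h^0(3K_W+B)\neq 0$ is exactly what is needed here (and must be checked to persist after each contraction, which it does since $3K_W+B=q^*(3K_{W_1}+B_1)+\Gamma$). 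The descent of the branch data, the Du Val claim and the termination are all fine modulo small repairs: the two points of $\Gamma\cap B$ need not be distinct or transverse, but since $B$ is smooth ($V$ being smooth), every singularity of $\bar B$ created by the process is a double point together with infinitely near double points, hence of type $A_n$, which suffices; and the Hodge index theorem bounds $K_W^2$ by $\bigl(K_W(2K_W+B)\bigr)^2/2K_S^2$ (a quantity invariant under the contractions), not by $\tfrac12(2K_W+B)^2$ --- or one can simply observe that the Picard number drops at each step.

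The genuine gap is the step you yourself flag as delicate: you assert that $\Gamma\cdot A_{i_0}>0$ produces ``a configuration that contradicts the minimality of $S$,'' but you never exhibit the contradiction, and minimality is not the tool that closes it. Here is the missing computation. Write $\mathcal E_j\subset V$ for the exceptional $(-1)$-curve of $h$ over $P_j$, so that $\pi^*A_j=2\mathcal E_j$ and $h^*K_S=K_V-\sum_j\mathcal E_j$, and set $D:=h_*\pi^*\Gamma$, a nonzero effective divisor on $S$ (nonzero because $\Gamma$, having self-intersection $-1$, is not one of the $A_j$). From $K_V=\pi^*(K_W+L)$ and the projection formula,
$$K_S\cdot D=2(K_W+L)\Gamma-\sum_j\Gamma A_j=\Bigl(2K_W\Gamma+B\Gamma+\sum_j\Gamma A_j\Bigr)-\sum_j\Gamma A_j=-2+2=0,$$
while $D^2=(\pi^*\Gamma)^2+\sum_j(\Gamma A_j)^2=-2+\sum_j(\Gamma A_j)^2$. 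Since $K_S^2>0$, $K_S\cdot D=0$ and $D$ is a nonzero effective divisor, the Hodge index theorem forces $D^2<0$, i.e.\ $\sum_j(\Gamma A_j)^2\leq 1$; together with the parity constraint $\sum_j\Gamma A_j\in 2\mathbb Z$ that you correctly observed, this yields $\Gamma A_j=0$ for every $j$. Without this (or an equivalent argument) the induction is unjustified, since contracting a $\Gamma$ meeting some $A_i$ would destroy the property $p^*(C_i)=A_i$ required in the statement.
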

\begin{proof}
All the above follows easily from the statement and the proof  of Proposition 3.9 of \cite{ca-ci-men}.
We  remark that although  Proposition 3.9 of \cite{ca-ci-men} is stated for surfaces $S$ with $p_g=0$,  the proof is valid for any double cover as above.
\end{proof}

\begin{remark}\label{formulasP} It is easily seen that the formulas and properties given above will still hold if we substitute $W$ with $P$, $B$ with $\bar B$ and $L$ with $\bar L$. Note also that, for $m\geq 3$,  $mK_W+B=p^*(mK_P+\bar B)+ (m-2)E$, where $E$ is the exceptional divisor of $p$.  Hence  $|mK_W+B|=p^*(|mK_P+\bar B|)+(m-2)E$ and thus in particular  $|mK_W+B|$ is composed with a pencil if and only if  $|mK_P+\bar B|$  is.
\end{remark}
\begin{remark}\label{inv_t=0}{\rm  In the case where the involution $i$ has no isolated fixed points and $H^0(W,2K_W+L)\neq 0$, we notice that also $2K_P+\bar L$ will be nef.  In fact in that case, $V=S$ and $\pi^*(K_W+L)=K_S$, yielding in particular that $K_W+ L$ is nef and thus also that $K_P+\bar L$ is nef.  Suppose, for contradiction, that  $2K_P+\bar L$ is not nef.
Then there exists an irreducible curve $E$   such that $(2K_P+\bar L)E<0$. Since $2K_P + L$ is a nonzero effective divisor, one must have that $E^2<0$ (see \cite{reid0}, pg 34). On the other hand, since $K_P+L$ is nef, necessarily  $K_P E<0$. So $E$ is a $(-1)$-curve. Then from $E(2K_P+\bar{L})< 0$, we conclude $E(K_P+\bar L)=0$ and so $E(3K_P+\bar B)<0$. This contradicts the construction of $P$ as above. So $2K_P+\bar{L}$ is nef.}
\end{remark}

\

\section{The canonical map}

\

In this section $S$ denotes a minimal surface of general type with $K_S^2=2\mathcal{X} -2$ and $p_g\geq 5$. First:

\begin{lem}
Let $S$ be a minimal surface of general type with $K_S^2=2\chi-2$ and $p_g\geq 5$. Then:
\begin{itemize}
 \item S is regular;
 \item $K_S^2=2p_g$;
 \item $S$ has no torsion divisors.
 \end{itemize}
\end{lem}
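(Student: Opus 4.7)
The plan is to treat the three claims separately and give each a short argument.

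For the first two claims: regularity follows from the classical fact (invoked in the introduction) that any minimal surface of general type with $K^2 < 2\chi$ satisfies $q=0$; in our case $K_S^2 = 2\chi - 2 < 2\chi$. Once $q(S)=0$ is known, $\chi(\OO_S) = 1+p_g$ and hence $K_S^2 = 2\chi-2 = 2p_g$ follows immediately.

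For the third claim I would argue by contradiction using étale covers. Assume $\tau \in \mathrm{Pic}(S)$ is torsion of order $d \geq 2$ and let $f\colon \tilde S \to S$ be the associated cyclic étale cover of degree $d$. Then $\tilde S$ is minimal of general type and, by multiplicativity of $K^2$ and $\chi$ under étale covers,
\[
K_{\tilde S}^2 \;=\; d K_S^2 \;=\; 2dp_g, \qquad \chi(\OO_{\tilde S}) \;=\; d\chi(\OO_S) \;=\; d(p_g+1),
\]
so $K_{\tilde S}^2 = 2\chi(\OO_{\tilde S}) - 2d$. In particular $K_{\tilde S}^2 < 2\chi(\OO_{\tilde S})$, so the first part of the lemma applied to $\tilde S$ gives that $\tilde S$ is regular, with $p_g(\tilde S) = dp_g + d - 1 \geq 2p_g + 1 \geq 11$. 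Applying Noether's inequality $K^2 \geq 2\chi - 6$ to $\tilde S$ then forces $d \leq 3$.

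It remains to eliminate $d=2$ and $d=3$. The plan is to invoke Horikawa's classification of surfaces with $K^2 = 2\chi - 6$ (for $d=3$) and $K^2 = 2\chi - 4$ (for $d=2$) and show that no member of those families carries a fixed-point-free $\mathbb{Z}/d$-action whose quotient realizes the invariants of $S$. I expect $d=3$ to be the shorter case since $\tilde S$ sits on the Noether line and Horikawa's structure there is rigid; the main obstacle is $d=2$, where $\tilde S$ lies off the Noether line, $p_g(\tilde S) \geq 11$, and Horikawa's families are richer, so the elimination requires a careful comparison of the branch locus in the double-cover presentation of $\tilde S$ against the canonical-map hypotheses already available for $S$ (degree $\leq 2$ and not composed with a pencil).
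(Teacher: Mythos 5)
Your treatment of the first two items is correct and is essentially what the paper does: regularity is the classical fact that a minimal surface of general type with $K^2<2\chi$ has $q=0$ (the paper cites Lemma 14 of \cite{bom}), and $K_S^2=2p_g$ is then immediate from $\chi=1+p_g$.

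The torsion statement, however, is not actually proved. Your reduction is sound as far as it goes: the connected cyclic \'etale cover $\tilde S\to S$ of degree $d$ attached to a torsion class of order $d$ is minimal of general type, the multiplicativity of $K^2$ and $\chi$ gives $K_{\tilde S}^2=2\chi(\OO_{\tilde S})-2d$ with $p_g(\tilde S)\ge 11$, and Noether's inequality yields $d\le 3$. But everything after that is an announced plan rather than an argument: you say you would ``invoke Horikawa's classification'' and ``show that no member of those families carries a fixed-point-free $\mathbb{Z}/d$-action,'' and you yourself flag the case $d=2$ as an unresolved obstacle. That elimination is precisely the mathematical content of the claim. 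Carrying it out --- deciding which surfaces with $K^2=2\chi-6$ (for $d=3$) or $K^2=2\chi-4$ (for $d=2$) admit free cyclic actions with a quotient having the stated invariants --- is the subject of an entire paper of Ciliberto and Mendes Lopes, and the paper under review simply cites its Theorem A (\cite{cmtorsion}): a minimal surface with $K^2=2\chi-2$ and nontrivial torsion must have $p_g\le 4$. As written, your proof of the third bullet stops exactly where the real work begins, so the lemma is not established by your argument.
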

\begin{proof}
From (\cite{bom}, Lemma 14), one has that $q=0$ and thus $K_S^2=2p_g$. Also, by (\cite{cmtorsion}, Theorem A), surfaces satisfying $K_2=2\chi -2$ can only have torsion if $p_g\leq 4$.
\end{proof}

We write $|K_S|=|H| + F$, where $|H|$ is the moving part of $|K_S|$ and $F$ the fixed part and
let $\rho: \tilde{S} \to S$ be a composition of blow-ups such that the variable part
$| \tilde{H}|$ of $|\rho^*K_S|$ is free from base points.  We denote by $E$ the exceptional divisor of $\rho$, by $F'$ the fixed part of $|\rho^*K_S|$ and by $\Sigma$ the canonical image of $S$.

\begin{lem}\label{lem_clasification}
Let $S$ be a minimal surface of general type with $K_S^2=2\chi-2$ and $p_g\geq 5$. Then one of the following occurs:
\begin{itemize}
\item $\phi_{K_S}$ is a birational map;
\item $\phi_{K_S}$ is a rational map of degree 2 onto a rational  surface.
\end{itemize}
\end{lem}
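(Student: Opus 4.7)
The plan is to extract the dichotomy from the two standard results quoted in the introduction, then to dispose of the rationality statement by a short computation with the formulas of Section~2.

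First I would note that, since $K_S^2=2\mathcal{X}-2<3\mathcal{X}$, Horikawa's theorem \cite{ho1} forbids $|K_S|$ from being composed with a pencil; hence $\Sigma=\phi_{K_S}(S)$ is a surface and $\phi_{K_S}$ is generically finite of some degree $d\ge 1$. The bound $d\le 2$ is exactly the content of \cite{men-par} in our range of invariants. Together these observations yield the dichotomy, and it remains only to prove that $\Sigma$ is rational when $d=2$.

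So assume $d=2$. Then $\phi_{K_S}$ factors through an involution $i$ on $S$ and, with the notation of diagram~(\ref{diagrama_inv}), the smooth surface $W$ (the minimal resolution of $S/i$) is birational to $\Sigma$; it suffices to show that $W$ is rational. The double-cover splitting $\pi_{*}\omega_V\cong\omega_W(L)\oplus\omega_W$ (with $\omega_W$ the $(-1)$-eigenspace of $i^{*}$), together with the assumption that every canonical section of $V$ is $i$-invariant, forces $H^0(W,K_W)=0$, i.e.\ $p_g(W)=0$; an analogous argument for $H^1(\mathcal O_V)$ combined with $q(V)=q(S)=0$ gives $q(W)=0$, hence $\mathcal X(\mathcal O_W)=1$. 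By Castelnuovo's criterion it now suffices to show that $K_W$ is not pseudo-effective. For this I would use that $2K_W+B$ is both big and nef: big because $(2K_W+B)^2=2K_S^2=4p_g>0$ by~(\ref{lema_involution}), and nef because $\pi^{*}(2K_W+B)=h^{*}(2K_S)$ is nef on $V$ as the pullback of $K_S$, a property which transfers to $W$ through the finite map $\pi$. On the other hand, using $K_W\cdot A_i=0$ and substituting $K_S^2=2p_g$, $\mathcal X(\mathcal O_S)=1+p_g$ and $\mathcal X(\mathcal O_W)=1$ into~(\ref{lema_involutioni}) gives
\[
 K_W\cdot(2K_W+B)=2\,K_W(K_W+L)=4-2p_g-t<0
\]
for $p_g\ge 5$ (since $t\ge 0$). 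Thus $K_W$ has strictly negative intersection with a big and nef divisor, so $K_W$ is not pseudo-effective; together with $q(W)=0$ this implies $W$ is rational, and therefore so is $\Sigma$.

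The main obstacle is the verification of the nefness of $2K_W+B$, which amounts to transporting the minimality of $S$ through the double cover $\pi$ and carefully treating the exceptional $(-2)$-curves $A_i$ of $W$ separately from the curves not contracted by $\pi$; once this is in place, the sign computation above closes the argument at once.
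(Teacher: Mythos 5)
Your reduction to $d\le 2$ and the closing sign computation are in the right spirit, but the argument has a genuine gap at the step where you conclude $p_g(W)=0$. The decomposition $H^0(V,\omega_V)=H^0(W,K_W+L)\oplus H^0(W,K_W)$ into invariant and anti-invariant parts is correct, but the fact that $\phi_{K_S}$ factors through $i$ only says that $i^*$ acts on $H^0(K_S)$ as $\pm\mathrm{id}$ (because the canonical image is nondegenerate), i.e.\ that exactly one of the two summands vanishes --- and a priori it could be either one. You simply \emph{assume} that every canonical section is $i$-invariant; if instead all sections were anti-invariant, you would have $p_g(W)=p_g(S)\ge 5$, the substitution $\chi(\mathcal{O}_W)=1$ into (\ref{lema_involutioni}) would be invalid, and the conclusion would fail. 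Ruling out the anti-invariant alternative is the real content of the rationality statement, and the natural way to do it is to note that $W$ is birational to $\Sigma$, a nondegenerate surface of degree at most $p_g<2p_g-4$ in $\pp^{p_g-1}$, and to apply Lemme~1.4 of \cite{beauville2} --- but that lemma already gives $\kappa(\Sigma)=-\infty$ outright, which together with $q\le q(S)=0$ proves rationality at once. This is precisely the paper's argument; once it is in place your double-cover computation is superfluous. (Your identification of the nefness of $2K_W+B$ as the main obstacle is also off the mark: that is standard from the minimality of $S$ and is quoted as (\ref{lema_involution}) in Section~2.)

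A secondary inaccuracy: the bound $\deg\phi_{K_S}\le 2$ is not simply ``the content of \cite{men-par}''. The paper first derives $\deg\phi_{K_S}\le 3$ from $2p_g=K_S^2\ge(\deg\phi_{K_S})\deg\Sigma$ and $\deg\Sigma\ge p_g-2$, with degree $3$ possible only for $p_g=5,6$; in those cases $\Sigma$ has minimal degree and the general canonical curve is smooth (since $|K_S|$ has at most one simple base point), so Theorem~2.1 of \cite{men-par} applies and yields a contradiction. Some such reduction is needed before that citation does any work.
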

\begin{proof} Note first that $\phi_{K_S}$ is generically finite because from (\cite{ho1}, Theorem 1.2) surfaces with $p_g\geq 5$ and $|K|$ composed with a pencil must satisfy $K^2> 4p_g-7$. So,
the canonical image $\Sigma$ is an irreducible and nondegenerate surface in $\pp^{p_g-1}$,
$$2p_g=K_S^2\geq (deg \, \phi_{K_S})(deg \, \Sigma)\geq (deg \, \phi_{K_S})(p_g-2).$$
 So $deg \, \phi_{K_S}\leq 3$ and if $deg \, \phi_{K_S}=3$, then $p_g=5$ or $p_g=6$.

Assume that $deg \, \phi_{K_S}=3$. Then a general curve in $|K_S|$ is smooth, because $|K_S|$ has no base points if $p_g=6$, or a unique base point if $p_g=5$. Since $\Sigma$ is a surface of minimal degree $p_g-2$ in $\pp^{p_g-1}$ we obtain a contradiction to (\cite{men-par}, Theorem 2.1) where it is shown that if the general curve in $|K_S|$ is smooth and the canonical map is of degree 3 onto a surface of minimal degree $p_g-2$ in $\pp^{p_g-1}$, then $p_g\leq 5$ and $K_S^2\leq 9$.
Therefore, we have $deg \, \phi_{K_S}\leq 2$.

If  $deg \, \phi_{K_S}= 2$, then $\Sigma$ is a surface of degree $\leq p_g$ in $\pp^{p_g-1}$. From $p_g\geq 5$, one obtains  $deg \, \Sigma\leq p_g< 2p_g-4$ and thus, by  (\cite{beauville2}, Lemme 1.4), $\Sigma$ has Kodaira dimension $-\infty$.   Since $S$ is regular we conclude that $\Sigma$ is a rational surface.

\end{proof}


If $\phi_{K_S}$ has degree $2$, then the canonical map factors through an involution $i$. In this case, we recall the diagram (\ref{diagrama_inv})
$$
\begin{CD}\ V@>h>>S\\ @V\pi VV  @VV \pi' V\\ W@>g >> S/i
\end{CD}
$$

where $\pi$ is a double cover with branch locus $2L\equiv B'= B + \sum _1^t A_i$. We consider the $\Q$-divisor $B/2$ and we keep the notation of Section 2.
\begin{remark}\label{lem_inv}
{\rm Note that  if the canonical map factors through an involution, since $\mathcal{X}(S/i)=1$ by Lemma \ref{lem_clasification},
we have
$$t=4-2h^0(2K_W +L)$$
so the number of isolated fixed points of the involution is $t=0, 2$ or $4$. }
\end{remark}

Also, as a consequence of the double cover formulas
\begin{prop}\label{numeros2} Let $S$ be a minimal surface of general type with $K_S^2=2\chi-2$ such that  the canonical map factors through an involution with rational quotient. Then, with the notation above:
\begin{enumerate}
\item $K_W(K_W+L)=-p_g+2-\frac{t}{2}$;
\item $h^0(2K_W +L)=2-t/2$;
\item $(2K_W + L)^2=K_W^2 -p_g - \frac{3}{2}t +4$;
\item $(2K_W+B)^2=4p_g$;
\item $(2K_W+B)(2K_W+L)=4\epsilon+4-t$;
\item $K_WL=K_W(B/2)=2-p_g -t/2 - K_W^2$;
\item $(B/2)^2=L^2+\frac{t}{2}=3p_g + t + K_W^2 - 4;$
\item $p_a(2K_W+B)=p_g+3 -\frac{t}{2};$
\item $h^0(3K_W+B)=p_a(2K_W+B);$
\item if $3K_W+B$ is nef and big then $h^0(4K_W+B)=p_a(3K_W+B).$
\end{enumerate}
\end{prop}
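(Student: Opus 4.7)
The plan is to treat the proposition as a bookkeeping exercise, deducing all ten items from the formulas recalled in Section 2 together with the numerical data supplied by the hypotheses. The preparatory observations are: since $W$ is rational, $\chi(\mathcal{O}_W)=1$; from the preceding lemma, $K_S^2=2p_g$ and $\chi(\mathcal{O}_S)=p_g+1$; and from $2L=B+\sum A_i$ with the $A_i$ disjoint $(-2)$-curves disjoint from $B$, one reads off $K_WA_i=0$, $LA_i=-1$, $BA_i=0$, whence the basic identities
\[
K_W B = 2K_W L, \qquad LB = 2L^2 + t, \qquad B^2 = 4L^2 + 2t.
\]

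With these in hand, items (2), (1) and (4) are direct substitutions into the formulas of Section 2: (2) is Remark \ref{lem_inv}, i.e.\ \eqref{eq-involutions-rito} with $K_S^2=2p_g$, $\chi(\mathcal{O}_S)=p_g+1$, $\chi(\mathcal{O}_W)=1$; (1) is \eqref{lema_involutioni} with the same substitutions; and (4) is \eqref{lema_involution} together with $K_S^2=2p_g$. For the remaining numerical items, the master identity comes from expanding (4) via the relations above: $(K_W+L)^2=p_g-t/2$. Combined with (1) this pins down $L^2=3p_g+K_W^2+t/2-4$, and then (6) follows from (1) by subtracting $K_W^2$ (using $K_W(B/2)=K_WL$ since $K_WA_i=0$), (7) follows from $(B/2)^2=L^2+t/2$, while (3) and (5) are straightforward expansions of $(2K_W+L)^2$ and $(2K_W+B)(2K_W+L)$, each reducing after substitution to linear combinations of $K_W(K_W+L)$ and $(K_W+L)^2$ which are already known. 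Item (8) is then adjunction, $p_a(2K_W+B)=1+\tfrac12[(2K_W+B)^2+K_W(2K_W+B)]$, with $K_W(2K_W+B)=2K_W(K_W+L)$ reduced via (1).

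The only items needing more than arithmetic are the cohomological statements (9) and (10). By Riemann-Roch on $W$,
\[
\chi(3K_W+B)=1+\tfrac12(3K_W+B)(2K_W+B)=1+\tfrac12[(2K_W+B)^2+K_W(2K_W+B)]=p_a(2K_W+B),
\]
so (9) reduces to the vanishings $h^1(3K_W+B)=h^2(3K_W+B)=0$. For this I would invoke Kawamata--Viehweg on the smooth surface $W$: since $2K_W+B$ is nef and big (already noted in Section 2, because $\pi^*(2K_W+B)=h^*(2K_S)$ pulls back a nef and big class from $S$), the vanishings $h^i(K_W+(2K_W+B))=0$ for $i>0$ are exactly what is needed. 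For (10) the argument is identical with $3K_W+B$ in place of $2K_W+B$: Riemann--Roch gives $\chi(4K_W+B)=p_a(3K_W+B)$, and the nef-and-big hypothesis on $3K_W+B$ (the premise of (10)) lets Kawamata--Viehweg kill the higher cohomology.

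I do not foresee any real obstacle: the whole proposition amounts to disciplined bookkeeping, the only non-elementary input being Kawamata--Viehweg vanishing for the last two items.
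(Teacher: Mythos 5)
Your proposal is correct and takes essentially the same route as the paper: items (1)--(8) are direct bookkeeping from the double-cover formulas of Section 2 using $\chi(\mathcal{O}_S)=p_g+1$ and $\chi(\mathcal{O}_W)=1$, and items (9)--(10) are Riemann--Roch together with vanishing for $K_W$ plus a nef and big divisor (the paper leaves the vanishing implicit; your appeal to Kawamata--Viehweg, and your use of the nef-and-big hypothesis on $3K_W+B$ rather than $2K_W+B$ for item (10), is the right way to fill it in). Note only that your expansion of item (5) gives $(2K_W+B)(2K_W+L)=4-t$, so the undefined $\epsilon$ in the statement must be read as $0$.
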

\begin{proof}
All the equalities above, except the two last, are a direct consequence of the formulas in the previous section. In fact, since $S$ is regular, it satisfies $\chi(\mathcal{O}_S)=p_g +1$ and  $W$ being a rational surface satisfies $\chi(\mathcal{O}_W)=1$.

 The two last equalities come from  the Riemann-Roch theorem  since $2K_W+B$ is nef and big.
 \end{proof}

Let us recall the next result due to Castelnuovo ($cf$. \cite{acgh}):
\begin{lem}[{\em Castelnuovo's Bound}]\label{castelnuovo}
Let $C$ be a smooth curve of genus g that admits a birational mapping onto a
nondegenerate curve of degree d in $\pp^r$. Let $m=[ (d-1)/(r-1)]$ and
$\varepsilon =(d-1)-m(r-1).$ Then $g\leq \pi (d,r)$, where
$\pi (d,r)=m(m-1)(r-1)/2 + m\varepsilon$.
\end{lem}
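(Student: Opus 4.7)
The plan is to follow Castelnuovo's classical argument via the Hilbert function of a general hyperplane section. Let $H\subset\pp^r$ be a general hyperplane and let $\Gamma$ be the corresponding hyperplane section of the image, pulled back to $C$ as a reduced divisor of degree $d$; by the nondegeneracy hypothesis, $\Gamma$ also spans $H\cong \pp^{r-1}$.

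The main step, and the one I expect to be the real obstacle, is the Uniform Position Lemma of Harris: in characteristic zero, the $d$ points of a general hyperplane section of an irreducible nondegenerate curve lie in uniform position, meaning that any two subsets of $\Gamma$ of the same cardinality impose the same number of conditions on forms of every fixed degree. Its proof ultimately relies on the irreducibility of the monodromy of the universal hyperplane section family. Granted uniform position, a short induction on $n$ yields the Hilbert function bound
$$h_\Gamma(n)\geq \min\bigl(d,\,n(r-1)+1\bigr)\qquad\text{for all } n\geq 1,$$
where $h_\Gamma$ denotes the Hilbert function of $\Gamma\subset\pp^{r-1}$.

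Next, from the short exact sequence
$$0\to \OO_C((n-1)H)\to \OO_C(nH)\to \OO_\Gamma \to 0$$
on $C$, the long cohomology sequence gives an injection of $H^0(\OO_C(nH))/H^0(\OO_C((n-1)H))$ into $H^0(\OO_\Gamma)$, and the image contains the restrictions of all degree-$n$ forms on $\pp^r$; the span of those restrictions in $H^0(\OO_\Gamma)$ has dimension $h_\Gamma(n)$. Hence
$$h^0(\OO_C(nH))-h^0(\OO_C((n-1)H))\geq h_\Gamma(n).$$

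Finally, summing these inequalities from $n=1$ to some $N\gg 0$ with $h^1(\OO_C(NH))=0$, Riemann--Roch gives $h^0(\OO_C(NH))=Nd-g+1$, while on the right-hand side the terms with $n>m$ contribute $0$ (since then $h_\Gamma(n)=d$). Substituting $d-h_\Gamma(n)\leq (m-n)(r-1)+\varepsilon$ for $n\leq m$ and summing the arithmetic progression,
$$g\leq \sum_{n=1}^{m}\bigl[(m-n)(r-1)+\varepsilon\bigr]=\frac{m(m-1)(r-1)}{2}+m\varepsilon=\pi(d,r),$$
which is the claimed bound. All the cohomological bookkeeping is standard; the substantive content of the argument is the uniform position statement.
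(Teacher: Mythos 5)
Your argument is correct: it is the classical Castelnuovo proof via the general hyperplane section, the Uniform Position Lemma, the Hilbert function estimate $h_\Gamma(n)\geq\min(d,n(r-1)+1)$, and the summation against Riemann--Roch, and all the steps (including the observation that the terms with $n>m$ vanish) check out. The paper itself gives no proof of this lemma --- it simply cites Arbarello--Cornalba--Griffiths--Harris --- and your write-up is precisely the standard argument from that reference, with the one genuinely nontrivial input (uniform position, valid in characteristic zero, which is the setting of the paper) correctly identified.
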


We will need also:
\begin{lem}\label{prop_sbp}
Let $S$ be a minimal surface of general type such that $K_S^2=3p_g-5$ and
$q=0$.
If the canonical map is birational, then $|K_S|$ does not have a fixed
component and has at most one (simple) base point.
\end{lem}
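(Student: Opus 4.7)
The plan is to combine Castelnuovo's bound (Lemma \ref{castelnuovo}) with the Hodge index theorem. Write $|K_S|=|H|+F$ with $|H|$ the moving part, and let $\rho:\tilde S\to S$ be a composition of blow-ups such that $|\tilde H|$ is base-point free. Set $k=H^2$, $\alpha=H\cdot F$, $\delta=K_S\cdot F\geq 0$, $B=k-\tilde H^2\geq 0$, and let $T$ be the sum of the multiplicities of the base points of $|H|$ successively resolved by $\rho$ (so $T\leq B$, with equality iff all those multiplicities equal $1$). Since $\phi_{K_S}$ is birational, a general smooth $C\in|\tilde H|$ maps birationally onto a nondegenerate curve of degree $\tilde H^2=k-B$ in $\mathbb{P}^{p_g-2}$, and Castelnuovo's bound yields $g(C)\leq\pi(k-B,p_g-2)$.

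An adjunction computation on $\tilde S$ gives $g(C)=k+\alpha/2+1-(B-T)/2$. Substituting $k=3p_g-5-\alpha-\delta$ (which follows from $K_S=H+F$ and $K_S^2=3p_g-5$), the Castelnuovo inequality reduces, in the principal regime $\alpha+\delta+B\leq 3$ (where $m=3$ in the formula for $\pi$), to
\[
5\alpha+4\delta+5B+T\leq 8.
\]
Since $\alpha$ must be even (as $2g(C)-2$ is), this forces $\alpha=0$; then $4\delta+5B+T\leq 8$ combined with $B\geq T\geq 1$ whenever any base point is present (and $B\geq 4$, $T\geq 2$ as soon as some $m_i\geq 2$) leaves only $B\in\{0,1\}$, with $B=1$ corresponding to exactly one simple base point. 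The sub-regimes $\alpha+\delta+B>3$ (where $m\leq 2$) give strictly stronger inequalities and are handled in the same way. There remain only $F=0$ with $B\leq 1$ (the desired conclusion) together with the borderline configurations $F\neq 0$, $\alpha=0$, $B=0$, $\delta\in\{0,1,2\}$.

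The borderline configurations are eliminated by the Hodge index theorem. If $\delta\geq 1$, then $H^2=3p_g-5-\delta\geq 8>0$ and $H\cdot F=0$, while $F^2=\delta-\alpha=\delta>0$ contradicts $(H\cdot F)^2\geq H^2\cdot F^2$. If $\delta=0$ with $F\neq 0$, then $K_S\cdot F=0$ forces $F$ to be supported on the negative definite configuration of $(-2)$-curves on $S$, so $F^2<0$; but $F^2=K_S^2-H^2=0$, a contradiction. Hence $F=0$ and $B\leq 1$, with $B=1$ arising only from a single simple base point. The principal subtlety is precisely this Hodge-index step, as Castelnuovo's bound alone does not exclude a fixed part disjoint from $|H|$.
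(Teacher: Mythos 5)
Your argument is correct, but it follows a genuinely different route from the paper's. The paper's proof leans on Horikawa's bound for birational canonical maps (from \cite{ho4}), which immediately forces $\tilde H^2\in\{3p_g-5,\,3p_g-6,\,3p_g-7\}$; the first two cases are dispatched using nefness and $2$-connectedness of $K_S$, and only the last requires a single application of Castelnuovo's bound (Lemma \ref{castelnuovo}). You instead re-derive the whole numerical constraint from scratch: the inequality $5\alpha+4\delta+5B+T\leq 8$, the parity of $\alpha$, and the case analysis reproduce (and in effect reprove the relevant instance of) Horikawa's restriction, at the cost of needing the extra Hodge-index step to kill a fixed part $F$ with $K_S\cdot F>0$ or with $K_S\cdot F=0$ --- a step the paper replaces by $2$-connectedness, and which you rightly identify as the point Castelnuovo alone cannot reach. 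What your approach buys is self-containedness (no appeal to \cite{ho4}) and an explicit list of the surviving numerical configurations; what the paper's buys is brevity. Three small inaccuracies in your write-up, none fatal: (i) $F^2=K_S\cdot F-H\cdot F=\delta-\alpha$, not $K_S^2-H^2$ (both vanish in the case you apply it, so the conclusion stands); (ii) for $p_g\leq 6$ one can have $m=4$ when $\alpha+\delta+B\leq 6-p_g$, so the inequality is not literally a consequence of Castelnuovo there --- but those few configurations satisfy it trivially and already lie in your surviving list, so the logic is unharmed; (iii) the configuration $F\neq 0$, $\alpha=\delta=0$, $B=1$ also survives the Castelnuovo step and should appear among your borderline cases, though your $\delta=0$ argument (negative definiteness of the $(-2)$-configuration) disposes of it without change.
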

\begin{proof}
We use the notation introduced in  the beginning of this section. Since we are assuming $\phi_{K_S}$ birational, by \cite{ho4},  $\tilde{H}^2\geq 3p_g-7$.
Thus we have
$$\tilde{H}^2=3p_g-5, \ 3p_g-6, \ \text{or} \ 3p_g-7.$$
If $\tilde{H}^2=3p_g-5$, the $|K_S|$ has no base points. If $\tilde{H}^2=3p_g-6$, since $K_S=H+F$ and $K_S$ is nef and 2-connected, then $F=0$ and we have at most one (simple) base point. So, if the statement is
not true, then
necessarily  $\tilde{H}^2=3p_g-7$ and the general curve $C$ in
$|\tilde{H}|$
is nonsingular. Note that $|\rho^*K_S|= |\tilde{H}| + E + F',$ where $E$ is the exceptional divisor of $\rho$ and $F'$ the fixed part of $\rho^*K_S$, then
$\tilde{H}(E
+ F')> 0$.
 So, by the adjunction formula, $C$ is of genus
$$3p_g -6 +\frac{1}{2}\tilde{H}(2E +F'),$$
On the other hand, from  Lemma \ref{castelnuovo} we obtain $g(C)\leq 3p_g-6$, a
contradiction.
\end{proof}

We can now give a  rough classification:

\begin{thm}\label{class}
Let $S$ be a minimal surface with $K_S^2=2\chi-2$ and $p_g\geq 5$. Then $S$ satisfies exactly one of the following:
\begin{enumerate}
\item [(I)] the canonical map $\phi_{K_S}$ is birational and
    \begin{enumerate}
    \item[(Ia)] $|K_S|$ is free from base points and $p_g\leq 7$, or
    \item[(Ib)] $|K_S|$ has exactly one (simple) base point and $p_g=5$;
    \end{enumerate}
\item [(II)] the canonical map factors through an involution $i$ and the number $t$ of isolated fixed points of $i$ is:
    \begin{enumerate}
    \item[(IIa)] $t=0$, or
    \item[(IIb)] $t=2$, or
    \item[(IIc)] $t=4$.
    \end{enumerate}
\end{enumerate}
\end{thm}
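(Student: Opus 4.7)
The plan is to combine Lemma \ref{lem_clasification}, Remark \ref{lem_inv}, the standard Horikawa bound for birational canonical map, and Castelnuovo's inequality (Lemma \ref{castelnuovo}) to sort $S$ into the listed cases. The coarse dichotomy is immediate: by Lemma \ref{lem_clasification} either $\phi_{K_S}$ is birational (case (I)), or $\phi_{K_S}$ is a rational map of degree $2$ onto a rational surface, in which case it factors through an involution $i$ on $S$; Remark \ref{lem_inv} then gives $t = 4 - 2h^0(2K_W+L) \in \{0,2,4\}$, producing the three subcases (IIa), (IIb), (IIc).

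In case (I) the first task is to bound $p_g$. Invoking Horikawa's inequality \cite{ho4}, when $\phi_{K_S}$ is birational one has $\tilde H^2 \geq 3p_g - 7$, and since the decomposition $\rho^{*}K_S = \tilde H + E + F'$ forces $\tilde H^2 \leq \rho^{*}K_S \cdot \tilde H \leq K_S^2 = 2p_g$, one obtains $p_g \leq 7$.

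It remains to determine the base-point structure of $|K_S|$ for each value $p_g = 5, 6, 7$. For $p_g = 7$ the Horikawa bound is saturated, $\tilde H^2 = K_S^2 = 14$, so $|K_S|$ has no base points and no fixed part, giving (Ia). For $p_g = 5$ the identity $K_S^2 = 10 = 3p_g - 5$ makes Lemma \ref{prop_sbp} directly applicable, producing either (Ia) (if $\tilde H^2 = 10$) or (Ib) (if $\tilde H^2 = 9$). The real obstacle is $p_g = 6$, where $\tilde H^2 \in \{11, 12\}$ and one must exclude $\tilde H^2 = 11$, i.e.\ rule out a $p_g = 6$ surface with a single simple base point.

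To carry out this last exclusion I would first use $2$-connectedness of $K_S$ to rule out a nontrivial fixed component, reducing to the case $\tilde H = \rho^{*}K_S - E$ with $E$ the exceptional divisor of one blow-up. A general smooth member $\tilde C \in |\tilde H|$ on $\tilde S$ then satisfies
\[
g(\tilde C) = 1 + \tfrac{1}{2}\bigl(\tilde H^2 + \tilde H \cdot K_{\tilde S}\bigr) = 1 + \tfrac{1}{2}(11 + 13) = 13
\]
by adjunction. On the other hand, the image of $\tilde C$ under the induced morphism $\tilde S \to \pp^{p_g-1}$ is a hyperplane section of the canonical image $\Sigma$, hence a nondegenerate curve of degree $\tilde H^2 = 11$ in $\pp^{p_g-2} = \pp^{4}$; since $\phi_{K_S}$ is birational, the restriction to $\tilde C$ is birational onto its image, so Castelnuovo's bound (Lemma \ref{castelnuovo}) gives $g(\tilde C) \leq \pi(11,4) = 12$, contradicting $g(\tilde C) = 13$. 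This completes the (Ia)/(Ib) dichotomy and proves the theorem.
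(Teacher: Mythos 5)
Your proposal is correct, and its overall skeleton coincides with the paper's: the dichotomy comes from Lemma \ref{lem_clasification}, case (II) is disposed of by Remark \ref{lem_inv}, the bound $p_g\leq 7$ in case (I) comes from the inequality $\tilde H^2\geq 3p_g-7$ together with $\tilde H^2\leq K_S^2=2p_g$, and the case $p_g=5$ is handled by Lemma \ref{prop_sbp}. The one genuine divergence is the treatment of $p_g=6$ and $p_g=7$: the paper simply cites (\cite{konno}, Lemma 1.3) and (\cite{a-kon}, Lemma 1.1) to get base-point-freeness, whereas you prove it directly --- trivially for $p_g=7$, where the Horikawa bound is saturated so $\tilde H^2=K_S^2$ forces $F=0$ and no base points, and for $p_g=6$ by ruling out $\tilde H^2=11$ via the same mechanism the paper uses in Lemma \ref{prop_sbp}: $2$-connectedness of $K_S$ kills the fixed part, adjunction gives $g(\tilde C)=1+\tfrac12(11+13)=13$ for a general $\tilde C\in|\tilde H|$, and Castelnuovo's bound for a nondegenerate degree-$11$ curve in $\pp^4$ gives $g\leq\pi(11,4)=12$, a contradiction. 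I checked these numbers and they are right. What your route buys is self-containedness --- the $p_g=6,7$ subcases no longer depend on the external lemmas of Konno and Ashikaga--Konno, and the argument is visibly the natural extension of the paper's own Lemma \ref{prop_sbp} to $K^2=3p_g-6$; the cost is only a slightly longer proof. Both arguments rest on the same standard facts ($2$-connectedness of canonical divisors on minimal surfaces of general type, Bertini, and Lemma \ref{castelnuovo}), so there is no gap.
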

\begin{proof}
If $deg \, \phi_{K_S}= 1$, by the Castelnuovo inequality, one has $p_g\leq 7$. Now, using (\cite{konno}, Lemma 1.3) and (\cite{a-kon}, Lemma 1.1) for $p_g=6$ and $7$ respectively, we obtain that $|K_S|$ is free from base points. Finally,
by Lemma \ref{prop_sbp} for $p_g=5$,  the canonical system $|K_S|$ does not have fixed components and has at most one (simple) base point.

If $deg \, \phi_{K_S}= 2$, the result follows from Remark \ref{lem_inv}.
\end{proof}

\begin{remark}\label{type_I}
{\rm  Surfaces of type $(Ia)$ with $p_g=7$, have been studied, among others, by Ashikaga and Konno (\cite{a-kon}) and Miranda (\cite{mi}). In particular, Miranda has proved that $\phi_{K_S}$ maps $S$ into the Veronese cone or into a rational normal scroll.

For surfaces of type $(Ia)$ with $p_g=6$, we refer to \cite{konno}. For this case Konno has shown that  the canonical image is contained in a threefold $W$ of $\Delta$-genus $\leq 1$ which is cut out by all quadrics through the canonical image.

Ciliberto in \cite{ci}, proves the existence of surfaces of type $(I_a)$ with $p_g=5$. He has studied the moduli space of such surfaces, its
dimension and its
unirationality. Furthermore he has shown that the canonical image of a generic
such surface has
only isolated singularities and cannot lie in a quadric.

Surfaces of type $(Ib)$, as far as we know,  have not  been  studied yet and we do not know whether they exist.  Next we give some of their properties.}
\end{remark}

\

\begin{prop}\label{type_Ib}
If $S$ is a surface of type $(Ib)$, then:
\begin{itemize}
\item a general canonical curve $D$ is smooth and non hyperelliptic of genus 11 ;
\item the image of $D$ via the canonical map of $S$ is a curve $D_0$ of degree 9 in $\pp^3$ with one double point;
\item the canonical image of $S$ is contained in a singular quadric of $\pp^4$.
\end{itemize}

\end{prop}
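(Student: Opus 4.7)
The plan is to work on the blow-up $\rho\colon \tilde S \to S$ at the simple base point $p_0$ of $|K_S|$, where $|\tilde H| := |\rho^{*}K_S - E|$ is base-point-free with $\tilde H^2 = K_S^2 - 1 = 9$. For the first bullet, a general $\tilde D \in |\tilde H|$ is smooth by Bertini, and via $\rho$ it is isomorphic to a general canonical divisor $D \in |K_S|$; adjunction gives $2g(D)-2 = D(D+K_S) = 2K_S^2 = 20$, so $D$ is smooth of genus $11$. Since $\phi_{K_S}\circ\rho$ is a birational morphism and $\tilde D$ moves in a base-point-free system, its restriction to $\tilde D$ is birational onto a general hyperplane section $D_0$ of $\Sigma$, so $D_0$ has degree $\tilde H^2 = 9$ in $\pp^3$.

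For the third bullet (singular quadric containing $\Sigma$), the plan combines a dimension count with a parity obstruction. Riemann--Roch on $\tilde S$ with $K_{\tilde S} = \rho^{*}K_S + E$ yields $(2\tilde H)(2\tilde H - K_{\tilde S}) = 14$, whence $\chi(\OO_{\tilde S}(2\tilde H)) = 6 + 7 = 13$; the standard vanishings for the big and nef divisor $2\tilde H$ give $h^0(\tilde S, 2\tilde H) = 13$. Since $(\phi_{K_S}\circ\rho)^{*}\OO_{\pp^4}(2) = \OO_{\tilde S}(2\tilde H)$, the restriction map from $H^0(\pp^4, \OO(2))$ (of dimension $15$) factors through $H^0(\Sigma, \OO_\Sigma(2)) \hookrightarrow H^0(\tilde S, 2\tilde H)$, so the space of quadrics vanishing on $\Sigma$ has dimension at least $15 - 13 = 2$. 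On the other hand, any smooth quadric threefold $Q\subset \pp^4$ has $\operatorname{Pic}(Q) = \mathbb{Z}$ generated by the hyperplane class, so divisors on it have even degree; since $\deg\Sigma=9$ is odd, every quadric containing $\Sigma$ must be singular.

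For the second bullet, I observe that $D_0$ lies on $Q\cap H$, which for generic $H$ is either $\pp^1\times\pp^1$ (when $Q$ has rank $4$) or a quadric cone (when $Q$ has rank $3$). In the first case, $D_0$ has bidegree $(a,b)$ with $a+b=9$ and arithmetic genus $p_a = (a-1)(b-1)\in\{0,6,10,12\}$; in the second, the strict transform $D_0'$ on $\mathbb{F}_2$ has class $a'C_0 + 9f$ and $p_a(D_0') = a'(9-a')-8\in\{0,6,10,12\}$ for the admissible range $a'\in\{1,2,3,4\}$. Since $p_g(D_0) = g(D) = 11$, the only value of $p_a$ compatible with $p_a\geq p_g$ is $p_a=12$, giving $\delta = p_a - p_g = 1$: exactly one (ordinary) double point, with bidegree $(4,5)$ in the rank-$4$ case.

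For non-hyperellipticity of $D$, set $\theta := K_S|_D$. From $0\to\OO_S\to\OO_S(K_S)\to\OO_D(\theta)\to 0$ and $q(S)=0$ we get $h^0(\theta)=4$; since every canonical section vanishes at $p_0$, the image of $H^0(K_S)\to H^0(\theta)$ lies in $H^0(\theta-p_0)$, forcing $h^0(\theta-p_0)=4$. Pulling back via $\rho$, the system $|\theta-p_0|$ on $D$ is identified with $|\tilde H|_{\tilde D}|$, which is base-point-free, so $|\theta-p_0|$ is a base-point-free $g^3_9$ on $D$. If $D$ were hyperelliptic of genus $11$ with pencil $|h|$, the Clifford--Martens classification of linear series on hyperelliptic curves forces every $g^3_9$ on $D$ to have the form $|3h+F|$ with a fixed effective base divisor $F$ of degree $3$, so no base-point-free $g^3_9$ exists on $D$; contradiction. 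The main obstacle is the enumeration of possible arithmetic genera of irreducible degree-$9$ curves on quadric surfaces (smooth and cone) and the Clifford--Martens analysis of hyperelliptic linear series; the rest reduces to Riemann--Roch on $\tilde S$, Bertini on $|\tilde H|$, and the parity obstruction on $\operatorname{Pic}$ of a smooth quadric threefold.
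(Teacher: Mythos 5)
Your first bullet, your non-hyperellipticity argument, and the parity argument showing that any quadric through $\Sigma$ must be singular are all sound (the paper gets non-hyperellipticity in one line from birationality of $\phi_{K_S}$, and obtains the double point of $D_0$ from the genus bound of Chiantini--Ciliberto--Di Gennaro plus the non-existence of smooth degree-$9$, genus-$11$ space curves, rather than from the quadric; your enumeration of arithmetic genera of irreducible degree-$9$ curves on a smooth quadric or a quadric cone is a legitimate alternative). The problem is the dimension count in the third bullet, on which your proof of the second bullet also depends.

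The claim $h^0(\tilde S, 2\tilde H)=13$ is false. Riemann--Roch does give $\chi(\OO_{\tilde S}(2\tilde H))=13$ and $h^2(2\tilde H)=0$, but there is no ``standard vanishing'' for $h^1$: here $2\tilde H-K_{\tilde S}=\rho^*K_S-3E$, which is not known to be nef, so Kawamata--Viehweg does not apply, and $h^1(2\tilde H)=0$ is precisely the assertion that the length-$3$ scheme defined by $\mathfrak m_{x}^2$ at the base point imposes independent conditions on $|2K_S|$ --- not automatic for $K_S^2=10$. In fact it fails: $H^0(\tilde S,2\tilde H)\cong H^0(S,2K_S\otimes\mathfrak m_x^2)$ contains the image $V$ of $\operatorname{Sym}^2H^0(K_S)\to H^0(2K_S)$ (every product of two canonical sections vanishes doubly at $x$), and by the result of Debarre invoked in the paper $\dim V\ge 14$, whence $h^0(2\tilde H)\ge 14$ and $h^1(2\tilde H)\ge 1$. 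Your conclusion that the quadrics through $\Sigma$ form a space of dimension at least $2$ is also impossible on its face: two independent quadrics through the irreducible nondegenerate $\Sigma$ would force it into a degree-$4$ complete intersection or into a hyperplane, contradicting $\deg\Sigma=9$. The genuinely nontrivial point is the upper bound $\dim V\le 14$, which your argument does not supply; the paper obtains it by restricting $V$ to a general $D\in|K_S|$ and showing that $h^0(D,K_D-2x)\ge 10$ would give $h^0(D,2x)\ge 2$, contradicting non-hyperellipticity. Since your second bullet is routed through the quadric, it inherits this gap, although that part would go through once the existence of the quadric is correctly established.
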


\begin{proof}
 Since by Theorem \ref{class}  $|K_S|$ has only one simple  base point,  a general member $D \in |K_S|$ is irreducible and nonsingular. Thus, from $K^2_S=10$ and the adjunction formula we obtain that the geometrical genus of $D$, $g(D)$, is 11. Also $D$ is nonhyperelliptic because the canonical map of $S$ is birational. The image  $D_0$  of $D$ by $\phi_{K_S}$ is an irreducible nondegenerate curve of degree $9$ in $\pp^3$. Since $g(D)=11$, $11\leq p_a(D_0)$. On the other hand, applying the main theorem of  \cite{ccg} we obtain  $p_a(D_0)\leq 12$.
 If $p_a(D_0)=11$, $D_0$ is a nonsingular curve with degree 9 and $g=11$ in $\pp^3$, but this is not possible, (cf.(\cite{har}, Exercise 6.4 and Remark 6.4.1). Hence $p_a(D_0)=12$ and so $D_0$ has exactly one singular double point.

For the last assertion consider the subspace $V$ of $H^0(2K_S)$ generated by products of sections of $K_S$. We claim that $V$ has dimension 14. By \cite[Prop. 3.1]{debarre}
  dim $V\geq 14$. Assume for contradiction that dim $V\geq 15$. Since the kernel of the restriction map $H^0(S, 2K_S) \to H^0(D, K_D)$ is isomorphic to $H^0(S,K_S)$ and so 5-dimensional, the image of the restriction of $V$ to $D$ is at least 10-dimensional. Let $x$ be the unique base point of $|K_S|$, then every section in $V$ vanishes at least twice in $x$. So we conclude that $h^0(D, K_D-2x) \geq 10$ and hence, by the Riemann-Roch theorem and $g(D)=11$, $h^0(D, 2x)\geq 2$, a contradiction because $D$ is nonhyperelliptic.
  So dim $V =14$ and since $h^0(\pp^4, \mathcal{O}_{\pp^4}(2))=15$ we conclude that the canonical image $\Sigma$ of $S$ is contained in a quadric $Q$ of $\pp^4$. Furthermore, $Q$ must be singular because the
degree of $\Sigma$ is 9 and any surface on a nonsingular quadric of
$\pp^4$ is a complete intersection.

\end{proof}

\

\section{Surfaces of type $(IIa)$}

\

We start by stating a general fact:
\begin{lem}\label{lem-marga}
Let $D$ be a nef divisor on a rational surface such that $D^2=0$ and
$KD=-2r<
0$. Then $|D|=|rN|$, where $|N|$ is a base point free pencil of curves of genus
$0$.
\end{lem}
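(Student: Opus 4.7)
The plan is to produce a fibration $\psi:Y\to\mathbb{P}^1$ whose fibre class is $N$, and then identify $D$ with $rN$ via the Hodge index theorem plus the rationality of $Y$.

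First I would run Riemann--Roch: $\chi(D)=1+\tfrac12 D(D-K)=1+r$, and $h^{2}(D)=h^{0}(K-D)=0$ because an effective $K-D$ would satisfy $(K-D)\cdot D\ge 0$ by nefness, contradicting $(K-D)\cdot D=-2r<0$. Hence $h^{0}(D)\ge r+1\ge 2$. Writing $|D|=|M|+F$ for moving and fixed parts, nefness of $D$ together with $D^{2}=0$ gives $D\cdot M=D\cdot F=0$; combining $M^{2}\ge 0$ (since $|M|$ is moving), $M\cdot F\ge 0$ (components of $F$ are not components of $M$), and $M^{2}+M\cdot F=D\cdot M=0$ forces $M^{2}=M\cdot F=F^{2}=0$.

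Next I would show $|M|$ is base-point-free: a base point would make two distinct members of $|M|$ meet there, contradicting $M^{2}=0$. The morphism $\phi_{M}$ then has image a curve, since $M^{2}=0$ precludes a surface image. Stein factoring $\phi_{M}=\beta\circ\psi$ with $\psi:Y\to B$ of connected fibres on a smooth curve $B$, the rationality of $Y$ forces $B\cong\mathbb{P}^{1}$. Let $N$ be the class of a fibre of $\psi$; then $|N|$ is a base-point-free pencil and every member of $|M|$ is $\psi^{*}$ of a divisor on $\mathbb{P}^{1}$, so $M\sim k_{1}N$ for some integer $k_{1}\ge 1$. This yields $D\cdot N=D\cdot M/k_{1}=0$.

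Finally, $D^{2}=N^{2}=D\cdot N=0$ with $D,N$ nonzero and nef. The Hodge index theorem (a totally isotropic subspace of the signature $(1,\rho-1)$ intersection form has dimension $\le 1$) forces $D\equiv\alpha N$ numerically, with $\alpha>0$ from nefness of both classes. Adjunction gives $KN=2g(N)-2$, and $KD=\alpha KN$ combined with $KD=-2r$ yields $\alpha\bigl(g(N)-1\bigr)=-r$; since $\alpha,r>0$ and $g(N)\ge 0$ (Bertini makes a general fibre smooth), necessarily $g(N)=0$ and $\alpha=r$. Because $Y$ is a smooth rational surface, $\mathrm{Pic}(Y)=\mathrm{NS}(Y)$ is torsion-free, so numerical equivalence lifts to linear equivalence, giving $D\sim rN$ and $|D|=|rN|$. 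The main obstacle I expect is the Hodge-index step: one must ensure the two isotropic classes really are $\mathbb{Q}$-proportional and then exploit rationality to upgrade numerical proportionality with integer coefficient $r$ to genuine linear equivalence; the rest is routine Riemann--Roch and fibration bookkeeping.
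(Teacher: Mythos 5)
Your argument is correct, and its first half coincides with the paper's: Riemann--Roch plus the vanishing of $h^0(K-D)$ gives $h^0(D)\ge r+1$, and the moving/fixed decomposition together with nefness of $D$ and of the moving part forces $M^2=M\cdot F=F^2=0$, so that $|M|$ is composed with a base point free rational pencil $|N|$. Where you genuinely diverge is in the endgame identifying $D$ with $rN$. You apply the Hodge index theorem to the isotropic pair $D,N$ to get $D\equiv\alpha N$ numerically, pin down $g(N)=0$ and $\alpha=r$ by adjunction, and then use torsion-freeness of $\operatorname{Pic}$ of a smooth rational surface to promote numerical to linear equivalence. The paper instead stays inside the linear system: it applies Zariski's lemma to the fixed part $Z$ (so $Z=bN$ with $b\ge 0$ rational), deduces $KN=-2$ from $KD<0$ and adjunction, and then plays $KM=-2a$ against $KD=-2r$ and $h^0(M)=a+1\ge r+1$ to force $Z=0$ and $a=r$ directly. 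The two mechanisms are close cousins --- Zariski's lemma is itself a consequence of the index theorem --- but yours needs the extra (true, and worth stating explicitly) fact that numerical and linear equivalence agree on a rational surface, while the paper's version avoids that step and is slightly more self-contained. Both arguments are complete.
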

\begin{proof}
By the Riemann-Roch theorem,  $h^0(D)\geq r+1$.

Write $|D|= |M| + Z$, where $Z$ is the fixed part and $|M|$ is the moving
part. Since $D$ is nef and $M$, being the moving part of
$|D|$
is nef,
$0=D^2\geq DM = M^2 + MZ \geq MZ\geq 0.$

So $M^2=MZ=Z^2=0$.  In particular $|M|$ is composed with a pencil, hence,
$M=aN$
with $h^0(M)=a+1\geq r+1.$   Since $MZ=0$,  $NZ=0$ and either $Z=0$ or so by
Zariski's lemma (see Chp. III, Lemma (8.2) of \cite{bpv}) $Z=b N$, with $b$ a positive rational
number.  From $KD=-2r$, we conclude that $KN<0$ and so, by adjunction,
$KN=-2$
and $KM\geq KD$.   Since $-2a=KM\leq KD=-2r$, $Z$ must be zero and $a=r$,
i.e.
$h^0(M)=r+1$.
\end{proof}

Throughout this section, we make the following assumption:
\begin{assu}\label{assum_t=0}
 $S$  is a minimal surface with $K_S^2=2p_g$, $q=0$, $p_g\geq 5$, and such that $\phi_{K_S}$ has degree 2 with $t=0$.
\end{assu}

We keep the notation of Section 2, and in particular $p:S/i \to P$ is the birational morphism such that $2K_P+\bar{L}$ is nef (see Remark \ref{inv_t=0}). Then:

\begin{lem}\label{prop_marri}
Let $S$ be as in Assumption \ref{assum_t=0}, then the linear system
 $|2K_P + \bar{L}|$ is a rational pencil without base points. Moreover, $|\pi^*(2K_P+\bar{L})|$ is a hyperelliptic pencil of genus 3 in $S$.
\end{lem}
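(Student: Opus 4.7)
The strategy is to exhibit $|2K_P+\bar L|$ as a pencil via Riemann--Roch and Kawamata--Viehweg, force $(2K_P+\bar L)^2=0$ by the Hodge index theorem, invoke Lemma~\ref{lem-marga} for base-point-freeness, and finally pull the pencil back to $S$ to identify its general fibre via Riemann--Hurwitz.

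I first show that $K_P+\bar L$ is nef and big: nefness because $K_W+L$ is nef (Remark~\ref{inv_t=0}) and $p^*(K_P+\bar L)=K_W+L$ by Proposition~\ref{prop_minimalmodel}; bigness because $(K_P+\bar L)^2=(K_W+L)^2=\tfrac{1}{2}K_S^2=p_g>0$ by (\ref{eq-involutions1}) with $t=0$. Writing $2K_P+\bar L = K_P+(K_P+\bar L)$, Kawamata--Viehweg gives $h^i(2K_P+\bar L)=0$ for $i>0$, and Riemann--Roch together with Proposition~\ref{numeros2}(1) transferred to $P$ via Remark~\ref{formulasP} yields
\[
h^0(2K_P+\bar L)=1+\tfrac{1}{2}(2K_P+\bar L)(K_P+\bar L)=1+\tfrac{1}{2}\bigl((2-p_g)+p_g\bigr)=2,
\]
so $|2K_P+\bar L|$ is already a pencil. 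Now $2K_P+\bar L$ is nef by Remark~\ref{inv_t=0}, and Hodge index applied to the nef classes $K_P+\bar L$ and $2K_P+\bar L$ gives
\[
(K_P+\bar L)^2\cdot(2K_P+\bar L)^2 \leq \bigl((K_P+\bar L)(2K_P+\bar L)\bigr)^2 = 4,
\]
so $(2K_P+\bar L)^2 \leq 4/p_g < 1$; being a nonnegative integer, it must vanish. Then $K_P(2K_P+\bar L)=0-2=-2$, and Lemma~\ref{lem-marga} with $r=1$ identifies $|2K_P+\bar L|$ with a base-point-free pencil of rational curves.

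For the pencil on $S$, set $\tilde D := (p\circ\pi)^*(2K_P+\bar L)$. Using $K_S=(p\circ\pi)^*(K_P+\bar L)$ (since $K_V=\pi^*(K_W+L)$ and $V=S$) together with $\deg(p\circ\pi)=2$, the projection formula gives $\tilde D^2=0$ and $K_S\tilde D = 2\cdot 2 = 4$, hence $p_a(\tilde D)=3$ by adjunction. For a general $N\in |2K_P+\bar L|$ its strict transform on $W$ is a smooth $\pp^1$ disjoint from the exceptional locus of $p$, and its preimage in $S$ is a double cover of $\pp^1$ branched over $N\cdot\bar B = 2\,N\cdot\bar L = 8$ points (computed from $(2K_P+\bar L)\bar L = 2 - K_P(2K_P+\bar L) = 4$). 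Riemann--Hurwitz then shows this cover is a smooth irreducible curve of genus $3$ equipped with the degree-$2$ map to $\pp^1$ induced by $\pi$, hence hyperelliptic.

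The main obstacle is the Hodge index step: the hypothesis $p_g\geq 5$ is precisely what forces the integer $(2K_P+\bar L)^2$ into the interval $[0,1)$, so the whole argument pivots there. A secondary technical point is the correct transfer of the formulas of Proposition~\ref{numeros2} from $W$ to $P$, which is legitimate because $p$ is a birational morphism between smooth surfaces and preserves intersection numbers of pullbacks, together with the identity $p^*(K_P+\bar L)=K_W+L$ from Proposition~\ref{prop_minimalmodel}.
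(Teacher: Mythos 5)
Your proposal is correct and follows essentially the same route as the paper: both use the intersection numbers from Proposition \ref{numeros2} together with the Index theorem (and $p_g\geq 5$) to force $(2K_P+\bar L)^2=0$, then invoke Lemma \ref{lem-marga} via $K_P(2K_P+\bar L)=-2$, and read off the genus of the pullback from $(2K_P+\bar L)(K_P+\bar L)=2$. Your extra computation of $h^0(2K_P+\bar L)=2$ by Kawamata--Viehweg and Riemann--Roch is harmless but redundant (it is Proposition \ref{numeros2}(2), and Lemma \ref{lem-marga} already produces the pencil), and your explicit Riemann--Hurwitz count of the $8$ branch points merely spells out what the paper leaves implicit.
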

\begin{proof}
Remark first that $(2K_P+\bar{L})^2= \alpha \geq 0$, since $2K_P+\bar{L}$ is nef.  Next, by Proposition \ref{numeros2}, we get $(K_P+\bar{L})^2=p_g$ and $(K_P+\bar{L})(2K_P +\bar{L})=2$. It follows immediately that $(K_P+\bar{L} + 2K_P+\bar{L})^2=p_g+ \alpha + 4>0$, so as a consequence of the Index theorem $p_g\cdot \alpha \leq 4$ and hence $(2K_P+\bar{L})^2=0$. Also, from Proposition \ref{numeros2} we see $(2K_P+\bar{L})K_P=-2$. So, as a consequence of Lemma \ref{lem-marga}, we conclude that  $|2K_P+\bar{L}|$ is a base point free pencil of rational curves and, since $(2K_P+\bar{L})(K_P+\bar{L})=2$, we finish the proof.
\end{proof}

\begin{prop}\label{thm_t=0}
Let $S$ be a surface as in Assumption \ref{assum_t=0}. Then $p_g\leq12$ and
 $S$ is the minimal resolution of a double cover of $\mathbb{F}_r$, $r\leq 3$, branched on a curve in $|8C_0 + 2(5+2r)f|$  having $12-p_g$ singular points (possibly infinitely near) of multiplicity 4 as only essential singularities.
\end{prop}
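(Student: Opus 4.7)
The plan is to use the genus-zero pencil provided by Lemma \ref{prop_marri} to realise the double cover $\bar V\to P$ as a double cover of a Hirzebruch surface $\mathbb{F}_r$, from which the branch class and the invariants of $S$ follow directly from the double-cover formulas in Proposition \ref{numeros2}.

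First I would contract the $(-1)$-curves of $P$ lying in fibres of the pencil $|2K_P+\bar L|$ and, if necessary, perform finitely many elementary transformations of Hirzebruch surfaces, to obtain a birational map $q:P\dashrightarrow\mathbb{F}_r$ with $r$ minimal and sending the pencil to the ruling $|f|$. Setting $\widetilde L:=q_\ast\bar L$ and $\widetilde B:=q_\ast\bar B$, the relation $|2K_{\mathbb{F}_r}+\widetilde L|=|f|$ forces $\widetilde L\equiv 4C_0+(2r+5)f$ and hence $\widetilde B\equiv 8C_0+2(5+2r)f$, which is the required branch class. Along the way I must verify that each contraction or elementary transformation is compatible with the double cover construction of Proposition \ref{prop_minimalmodel} and introduces only admissible singularities into $\widetilde B$.

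Next, applying the formulas of Proposition \ref{numeros2} on $\mathbb{F}_r$ in the hypothetical case where $\widetilde B$ has no essential singularities yields $K^2=2(K_{\mathbb{F}_r}+\widetilde L)^2=24$ and $\chi=13$, so such a cover would have $p_g=12$. A direct canonical-resolution computation (replacing $\widetilde L$ by $\pi^\ast\widetilde L-2E$ upon blowing up an ordinary multiplicity-$4$ point of the branch) then shows that each such $4$-point drops $K^2$ by $2$ and $\chi$ by $1$, preserving the identity $K^2=2\chi-2$ and keeping the double cover Du Val. Every other essential singularity type---$(3,3)$-points, $(5,k)$-points, higher-multiplicity points, or $4$-points whose strict transform still carries an essential singularity of a different nature---either fails to preserve $K^2=2\chi-2$, violates the Du Val hypothesis of Proposition \ref{prop_minimalmodel}, or breaks the minimality of $S$. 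Infinitely near $4$-points contribute the same numerical discrepancy, so the count forces exactly $12-p_g$ essential singularities, all of multiplicity $4$, possibly infinitely near; in particular $p_g\leq 12$.

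Finally, for the bound $r\leq 3$ I would use $\widetilde B\cdot C_0=10-4r$: for $r\geq 4$ this forces $C_0$ to appear in $\widetilde B$ with multiplicity at least $2$, so that $\widetilde B$ is non-reduced along $C_0$. An elementary transformation of $\mathbb{F}_r$ centred at a suitable point of $C_0$ then yields a double cover model over $\mathbb{F}_{r-1}$, contradicting the minimality of $r$; the borderline case $r=3$ leaves $C_0$ as a simple component of $\widetilde B$, which is admissible. The hard part will be the exhaustive singularity analysis of the second step: systematically ruling out every essential singularity type other than (possibly infinitely near) multiplicity-$4$ points, while maintaining compatibility with Proposition \ref{prop_minimalmodel} and the minimality of $S$, is where the bulk of the case work lies.
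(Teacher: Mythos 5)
Your skeleton matches the paper's (contract the fibre components of the genus-zero pencil $|2K_P+\bar L|$ from Lemma \ref{prop_marri} to reach $\mathbb{F}_r$, read off the branch class from $2K_{\mathbb{F}_r}+\widetilde L\equiv f$, then bound $r$), and your derivation of $\widetilde L\equiv 4C_0+(2r+5)f$, of the count $12-p_g$ (hence $p_g\le 12$), and of $r\le 3$ via $\widetilde B\cdot C_0=10-4r$ together with reducedness of the branch are all sound. But the central claim --- that the \emph{only} essential singularities are $12-p_g$ (possibly infinitely near) quadruple points --- is exactly the part you defer to an ``exhaustive singularity analysis,'' and the criterion you propose for it (each singularity must ``preserve $K^2=2\chi-2$'') is not a valid exclusion principle: $K^2=2\chi-2$ is one numerical constraint on the totals, and a priori different combinations of essential singularities could yield the same pair $(K^2,\chi)$. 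As written, the hard step is not proved.

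The observation you are missing is that no case analysis is needed: the multiplicities are forced by the same coefficient comparison that gives you the class of $\widetilde L$. Take $\gamma:P\to\mathbb{F}_r$ to be an honest birational morphism (a composition of $12-p_g$ blow-downs of $(-1)$-curves contained in fibres, the number being $8-K_P^2=12-p_g$; no elementary transformations are needed, and avoiding them keeps the pushforward bookkeeping clean). Writing $\bar L=\gamma^*(aC_0+bf)-\sum c_iE_i$ and $K_P=\gamma^*K_{\mathbb{F}_r}+\sum E_i$, the identity $2K_P+\bar L=\gamma^*(f)$ becomes
$$\gamma^*\bigl((a-4)C_0+(b-2(2+r))f\bigr)+\sum(2-c_i)E_i=\gamma^*(f),$$
which forces $a=4$, $b=5+2r$ \emph{and} $c_i=2$ for every $i$. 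Hence $\bar B=2\bar L=\gamma^*(8C_0+2(5+2r)f)-4\sum E_i$: every centre of $\gamma$ is a point of multiplicity exactly $4$ of the (strict transform of the) branch curve, and since $\bar B$ on $P$ has only inessential singularities, these $12-p_g$ quadruple points are precisely the essential singularities. This one comparison replaces your entire second step. (For $r\le3$ the paper instead uses nefness of $K_P+\bar L$ on the strict transform $B_0$ of $C_0$, namely $0\le(K_P+\bar L)B_0\le(2C_0+(3+r)f)C_0=3-r$; your reducedness argument also works and does not need the detour through minimality of $r$.)
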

\begin{proof}
Let $S \to P$ be the map of degree 2 with  branch curve $\bar{B}=2\bar{L}$, with possibly inessential singularities. Using Lemma \ref{prop_marri}, we know that
$|2K_P + \bar{L}|$ is a genus 0 pencil without base points. Then, we have $P\neq \pp^2$ and, from Proposition \ref{numeros2}, $K_P^2=p_g-4$. So contracting $12-p_g$ $(-1)$-curves contained in the fibres of $|2K_P + \bar{L}|$ we get a birational morphism $\gamma: P \to \mathbb{F}_r.$

Let $f$ be a fibre of $\mathbb{F}_r$ and $C_0$ a section with $C_0^2=-r$. Denote $\bar{L}= \gamma^*(aC_0 + bf)- \sum c_iE_i$, since $2K_P+\bar{L} = \gamma^*(f)$, then $$(a-4)C_0 + (b-2(2+r))f + \sum (2-c_i)E_i=f$$
and we obtain $a=4$, $c_i=2$ and $b=+5 + 2r$.

Note that $c_i=2$ for every $i$ means that the essential singularities are quadruple points.

In the end, we can write  $\gamma^*(C_0)= B_0  + \sum
\xi_iE_i$, where $B_0$ is the strict transform of $C_0$.  Then  $0\leq (K_P+ \bar{L})B_0=(2C_0 +(3+r)f)C_0 -\Sigma\xi_i \leq 3-r$
which implies $r\leq 3$.
\end{proof}

\begin{remark}\label{rem_r=3}
In Proposition \ref{thm_t=0}, if $p_g \leq 11$ we have also that $r\leq 2$. Indeed, if $r=3$ the image of $\bar{B}$ is in $| 8C_0 + 22f|$. We see that $C_0$ is in the fixed part of $| 8C_0 + 22f|$ and $C_0(7C_0 + 22f)=1$, so the essential singularities are not contained in $C_0$. Since $p_g \leq 11$ there exists at least one essential singularity on a fibre of the ruling. Blowing up this point and next contracting the strict transform of the fibre, we obtain a new birational morphism from $P$ onto $\mathbb{F}_2$ with a quadruple point on the infinity section.

\end{remark}

\begin{cor}\label{cor_t=0}
Let $S$ be as in Proposition \ref{thm_t=0}. Assume also that $p_g\leq 11$ and $C_0$ is not contained in $\bar{B}$. Then  there exists a rational map such that the image of $\bar{B}$ in $\pp^2$ is a curve of degree 14 with $12-p_g$ points of multiplicity 4 and one point of multiplicity 6 as unique essential singularities. For $r=2$, the singular point of multiplicity 6 is infinitely near to, at least, a point of multiplicity 4.
\end{cor}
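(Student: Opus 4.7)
The plan is to construct, in each of the three cases $r\in\{0,1,2\}$, an explicit birational map $\psi:\mathbb{F}_r\dashrightarrow\mathbb{P}^2$ and to track the image of $\bar B$ together with its singularities. The unifying idea is that $r=1$ is the base case; for $r\in\{0,2\}$ one first performs a single elementary transformation centred at a quadruple point $q$ of $\bar B$ (which exists because $p_g\leq 11$ gives $12-p_g\geq 1$) to land in $\mathbb{F}_1$, and then applies the standard contraction $\mathbb{F}_1\to\mathbb{P}^2$ of the $(-1)$-section $C_0$ to a point $p_0$; the sextuple point of the plane image will sit at $p_0$.

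For $r=1$ take $\psi$ to be the contraction itself. The pullback of a line has class $C_0+f$, so $\deg\psi_*\bar B=\bar B\cdot(C_0+f)=14$, and since $C_0\not\subset\bar B$ the multiplicity of $\psi_*\bar B$ at $p_0$ equals $\bar B\cdot C_0=-8+14=6$. For $r\in\{0,2\}$, let $f_0$ be the $f$-fibre through $q$; blow up $q$ with exceptional divisor $E$ and contract the strict transform $\tilde f_0=f_0-E$, which is a $(-1)$-curve. For $r=2$ one has $q\notin C_0$ automatically since $\bar B\cdot C_0=-16+18=2<4$; in both cases the contracted surface is $\mathbb{F}_1$ (the image of the strict transform of $C_0$ is a $(-1)$-section), and a routine intersection calculation yields $\psi_*\bar B\in|8C_0'+14f'|$ together with a new quadruple point at $q':=\psi(\tilde f_0)$ of multiplicity $\bar B\cdot f_0-4=4$. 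Composing with the $r=1$ contraction produces a plane curve of degree $14$ with a sextuple point at $p_0$.

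The crucial geometric distinction between $r=2$ and $r=0$ is the position of $q'$ relative to $C_0'$. For $r=2$ one has $\tilde C_0\cdot\tilde f_0=C_0\cdot f=1$, so $q'\in C_0'$; after the second contraction the quadruple point at $q'$ is absorbed into $p_0$ and becomes infinitely near to it, which is exactly the structure claimed. For $r=0$ the strict transforms of the two rulings through $q$ satisfy $\tilde C_0^{(q)}\cdot\tilde f^{(q)}=(C_0-E)(f-E)=0$, so $q'\notin C_0'$ and it descends to an ordinary quadruple point of the plane image, distinct from $p_0$. In either case the final tally is $(12-p_g-1)+1=12-p_g$ essential quadruple points, since the point $q$ used by the elementary transformation is effectively replaced by the one at $q'$.

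The main obstacle I anticipate is the careful bookkeeping of intersection numbers on the blow-up of $\mathbb{F}_r$: verifying that $\tilde f_0$ really is a $(-1)$-curve, that the contracted surface is $\mathbb{F}_1$ and not $\mathbb{F}_3$ (this is why $q$ must lie off $C_0$ — a point on $C_0$ would \emph{raise} the index of the Hirzebruch surface), that $\psi_*\bar B$ lies in the stated linear system, and that the multiplicity at $p_0$ is exactly $6$ via the projection formula for contraction of a $(-1)$-curve. For $r=2$ one should also verify that no other original quadruple point of $\bar B$ accidentally sits on $C_0'$ in $\mathbb{F}_1$, which is precisely why the statement reads ``infinitely near to, at least, a point of multiplicity $4$''.
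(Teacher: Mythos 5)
Your proposal is correct and follows essentially the same route as the paper: both arguments combine an elementary transformation centred at a quadruple point with contractions down to $\mathbb{P}^2$, with the $(6,4)$-point for $r=2$ versus distinct $6$- and $4$-points for $r=0$ detected exactly as you do, by whether the contracted fibre meets the relevant section. The only (cosmetic) difference is the direction of normalization --- the paper reduces $r=1$ to $r\in\{0,2\}$ and then contracts the two rulings through a quadruple point, while you reduce $r\in\{0,2\}$ to $\mathbb{F}_1$ and contract the $(-1)$-section --- but the composite birational maps to $\mathbb{P}^2$ and the intersection-number bookkeeping are the same.
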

\begin{proof}
From  Proposition \ref{thm_t=0} and Remark \ref{rem_r=3}, up to an elementary transformation of $F_r$ centered in a quadruple point, we can assume that $r=0$ or $2$.

 If $r=0$, let $f_1$ and $f_2$ be the two rulings of $\mathbb{F}_0$. Then the image of $\bar{B}$ in $\mathbb{F}_0$ is a curve $\bar{B}_{\mathbb{F}_0} \in \mid 8f_1 + 10f_2\mid$. Since $p_g\leq 11$, there exists at least one point of multiplicity 4. We blow up one of these points of $\mathbb{F}_0$ and obtain a new line, a $(-1)$-curve; next blow down the two fibres passing through the point. We obtain two singularities of multiplicity 4 and 6; the image of $\bar{B}_{\mathbb{F}_0}$ meets the new line in $6+ 4 +4=14$ points. In sum, there exists a birational map $\mathbb{F}_0 \dashrightarrow \mathbb{P}^2$ such that the image of $\bar{B}_{\mathbb{F}_0}$
 is a curve of degree 14 with two (distinct) points of multiplicity $6$ and $4$ plus $11-p_g$ points of multiplicity $4$. Note that some of the essential singular points are possibly infinitely near.

 Similarly, if $r=2$, then $\bar{B}_{\mathbb{F}_2}\in |8C_0 + 18f|$. Since by assumption $C_0$ is not contained in $\bar{B}$, as before, there exists at least one point of multiplicity 4 not contained in the infinity section, so there exists a birational map $\mathbb{F}_2 \dashrightarrow \mathbb{P}^2$ such that the image of $\bar{B}_{\mathbb{F}_2}$ in $\pp^2$ is a curve of degree 14 with a point of multiplicity $(6,4)$ plus $11-p_g$ points of multiplicity $4$. As in the case $r=0$ we note that apart from the singular point of multiplicity $(6,4)$, some of the other essential singularities are possibly infinitely near.
 \end{proof}


\

\section{Surfaces of type $(IIb)$}

\

We recall that $S$ is a surface of type $(IIb)$ if the canonical map factors through an involution with $t=2$. We can then  write the  branch curve of the double cover $V\to W$ as  $2L\equiv B'=B +A_1 + A_2$, where $A_1, A_2$ are $(-2)$-curves.

From Proposition \ref{numeros2} we have $h^0(3K_W + B)=p_g +2$, so if $P$ and $\bar{B}$ are as in Proposition \ref{prop_minimalmodel}, the effective divisor $3K_P+\bar{B}$ is nef.

Also, from Proposition \ref{numeros2}:
\begin{lem}\label{lem_t=2}
Let $S$ be a minimal surface with $K_S^2=2p_g$, $q=0$ and $p_g\geq 5$, and $i$ an involution on $S$ such that $t=2$. Then
$K_P(\bar{B}/2)=1-p_g-K_P^2$ and $(\bar{B}/2)^2=\bar{L}^2+1=3p_g-2+K_P^2$;
\end{lem}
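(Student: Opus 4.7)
The plan is that this lemma is essentially a direct specialization: it is exactly what one gets by substituting $t=2$ into items (6) and (7) of Proposition \ref{numeros2}, provided one is allowed to read those identities on $P$ rather than on $W$. The legitimacy of that transfer is precisely the content of Remark \ref{formulasP}, which asserts that the formulas of Section 2 (and hence those of Proposition \ref{numeros2}, which are derived from them) remain valid with the substitutions $W\leadsto P$, $B\leadsto \bar B$, $L\leadsto \bar L$. So the first step of the proof is simply to invoke that remark.

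Second, I would specialize Proposition \ref{numeros2}(6) in this form:
\[
K_P(\bar B/2)\;=\;K_P\bar L\;=\;2-p_g-\frac{t}{2}-K_P^2.
\]
Setting $t=2$ gives $K_P(\bar B/2)=1-p_g-K_P^2$, which is the first equality claimed.

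Third, I would do the same with Proposition \ref{numeros2}(7):
\[
(\bar B/2)^2\;=\;\bar L^2+\frac{t}{2}\;=\;3p_g+t+K_P^2-4.
\]
Setting $t=2$ yields simultaneously $(\bar B/2)^2=\bar L^2+1$ and $(\bar B/2)^2=3p_g-2+K_P^2$, which is the second assertion.

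There is no real obstacle; the only thing to be careful about is making sure Remark \ref{formulasP} is applicable here. It is, because $t=2$ implies $h^0(2K_W+B)=h^0(3K_W+B)-p_a(\cdot)$ type considerations already ensured $h^0(3K_W+B)=p_g+2\ne 0$ (the hypothesis needed in Proposition \ref{prop_minimalmodel}), and this was already noted at the beginning of the section. So the derivation on $P$ is justified, and the lemma reduces to arithmetic.
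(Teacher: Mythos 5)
Your proposal is correct and matches the paper's own (implicit) argument: the lemma is stated immediately after the observation that $h^0(3K_W+B)=p_g+2\neq 0$ guarantees the model $P$ exists, and is obtained exactly by reading Proposition \ref{numeros2}(6) and (7) on $P$ via Remark \ref{formulasP} and setting $t=2$. The arithmetic checks out, so nothing more is needed.
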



Throughout this section we will prove the following theorem:

\begin{thm}\label{thm_t=2}
Let $S$ be a minimal surface with $K_S^2=2p_g$, $q=0$ and $p_g\geq 5$, such that the canonical map factors through an involution with $t=2$. Then $p_g\leq 8$ and one of the following occurs:
\begin{enumerate}
    \item $S$ is the minimal resolution of a double cover of a weak Del Pezzo surface $T$ of degree $p_g+1$ branched on an effective divisor in $|-4K_T|$ having exactly two (3,3)-points as essential singularities.

    \item  $S$ is the minimal resolution of a double cover of $\mathbb{F}_r$, $r\leq 2$,
    whose branch curve is the union of a curve in $|8C_0 + (9+4r)f)|$ with a fibre. The curve has $8-p_g$ singular points (possibly infinitely near) of multiplicity 4 and another of type $(4,4)$ and the fibre is tangent to the curve at the $(4,4)$-point.
        \end{enumerate}
\end{thm}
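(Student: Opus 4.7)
My plan is to analyse the pair $(P,\bar B)$ furnished by Proposition \ref{prop_minimalmodel} (on which $3K_P+\bar B$ is nef) and then blow down enough $(-1)$-curves to reach a minimal rational surface, whose type determines whether we are in case (1) or case (2). Note that at $t=2$, Proposition \ref{numeros2} gives $h^0(2K_P+\bar L)=1$, so $|2K_P+\bar L|$ contains a unique effective member $\Delta$; this divisor will be the main organising device.

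The first step is to record the intersection numbers. From Proposition \ref{numeros2} (applied to $P$ via Remark \ref{formulasP}) and Lemma \ref{lem_t=2}, together with the relation $2\bar L=\bar B+C_1+C_2$ with $C_i$ disjoint $(-2)$-curves, I compute
\[
K_P\bar B=2-2p_g-2K_P^2,\qquad \bar B^2=12 p_g-8+4K_P^2,
\]
\[
(3K_P+\bar B)^2=K_P^2+4,\qquad (3K_P+\bar B)(2K_P+\bar B)=2p_g+2,
\]
while $(2K_P+\bar B)^2=4p_g$ by (\ref{lema_involution}). Since $2K_P+\bar B$ pulls back to $2K_S$ it is nef and big, and $3K_P+\bar B$ is nef by construction, so Hodge Index applied to these two classes yields $(2p_g+2)^2\geq 4p_g(K_P^2+4)$, hence $K_P^2\leq p_g-2$. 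In particular $\Delta^2=\Delta K_P=K_P^2-p_g+1\leq -1$, so $\Delta$ is negative, and a non-trivial part of it must consist of $(-1)$- and $(-2)$-curves.

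Once the numerics are in place, I would split into cases according to the minimal rational model of $P$, reached by successively contracting $(-1)$-curves that appear in $\Delta$ or meet the $C_i$. If the minimal model is a surface $T$ with $-K_T$ nef (a \emph{weak Del Pezzo}), the identities $(K_P+\bar L)^2=p_g$ and $(2K_P+\bar L)(K_P+\bar L)=2$ should force $K_P+\bar L$ to be the pullback of $-K_T$, giving $\bar L\sim -2K_T$ and $\bar B\sim -4K_T$, with $K_T^2=p_g+1$: this is case (1), and tracking the blow-ups pins down the essential singularities to be exactly two $(3,3)$-points. If instead the minimal model is some $\mathbb F_r$, then as in the proof of Proposition \ref{thm_t=0} the ruling descends to $P$; but now $h^0(2K_P+\bar L)=1$ rather than $2$, which I would show forces a fibre to occur as a component of $\bar B$. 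Computing the residual class of $\bar B$ on $\mathbb F_r$, and bounding $r$ via positivity on the section $C_0$ as in the proof of Proposition \ref{thm_t=0}, lands one in $|8C_0+(9+4r)f|$, and the requirement that only essential singularities of the stated kind appear forces the $(4,4)$-point tangent to the fibre component, plus $8-p_g$ further quadruple points. In either case the bound $K_P^2\leq p_g-2$, combined with the count of blow-ups required to accommodate the essential singularities, yields $p_g\leq 8$.

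The main obstacle I anticipate is the singularity book-keeping under the chain of blow-downs: one has to verify that no alternative configuration of essential singularities compatible with the numerical data can survive, and in particular that the two $(-2)$-curves $C_1$, $C_2$ remain disjoint from $\bar B$ throughout the contractions. Identifying exactly which $(-1)$-curves lie in $\Delta$, and showing they contract to the $(3,3)$ or $(4,4)$ configurations rather than to some other singularity type a priori allowed by the numerics, is the step I expect to be by far the most delicate.
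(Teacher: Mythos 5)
Your numerics for $\bar B$ are correct, and your Hodge--index pairing of $2K_P+\bar B$ with $3K_P+\bar B$ does recover the upper bound $K_P^2\le p_g-2$ (the paper gets the same bound by pairing $K_P$ with $\bar B/2$). But the proposal has a genuine gap: you never establish a \emph{lower} bound on $K_P^2$, and without one there is no finite case division. The paper's proof hinges on the inequality $0\le (3K_P+\bar B)(2K_P+\bar L)=K_P^2+3-p_g$, which uses that $2K_P+\bar L$ is effective (here $h^0(2K_P+\bar L)=1$) and $3K_P+\bar B$ is nef; this forces $K_P^2\in\{p_g-3,\,p_g-2\}$, and that dichotomy --- not the type of the minimal rational model --- is what separates cases (1) and (2). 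Your proposed split is also ill-posed as stated: the weak Del Pezzo $T$ of case (1) has degree $p_g+1\le 9$ and is in general not a minimal rational surface, so ``contract to the minimal model and see what you get'' does not produce the two alternatives of the theorem. Moreover the identities you invoke in the Del Pezzo branch, $(K_P+\bar L)^2=p_g$ and $(2K_P+\bar L)(K_P+\bar L)=2$, are the $t=0$ values from Lemma \ref{prop_marri}; for $t=2$ one has $(K_P+\bar L)^2=p_g-1$ and $(2K_P+\bar L)(K_P+\bar L)=0$, so the argument that $\bar L\sim -2K_T$ does not go through as written.

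The missing idea is the divisor $4K_P+\bar B$. The paper shows that if it is not nef one can contract pairs of $(-1)$-curves (each $E_i$ together with the image of the corresponding $C_i$, which becomes a $(-1)$-curve in the branch locus) to reach $P_1$ with $4K_{P_1}+B_{P_1}$ nef; the identity $(2K_{P_1}+B_{P_1})(4K_{P_1}+B_{P_1})=4-s$ then pins down the number $s$ of contractions. When $K_P^2=p_g-3$ one gets $s=4$ and, by the Index theorem, $2K_{P_1}+B_{P_1}/2\equiv 0$, which is exactly how the weak Del Pezzo structure and the two $(3,3)$-points arise; when $K_P^2=p_g-2$ one gets $s=0$, $(4K_P+\bar B)^2=0$, $K_P(4K_P+\bar B)=-2$, and Lemma \ref{lem-marga} produces the genus-$0$ pencil leading to $\mathbb F_r$, with the relation $4K_P+\bar B=2D+C_1+C_2$ forcing the fibre component and the $(4,4)$-point. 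Your plan contains no substitute for this mechanism, and the singularity bookkeeping you defer is precisely where the content of the theorem lies; as it stands the proposal is an outline with a wrong organising dichotomy rather than a proof.
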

\begin{proof}
We divide the proof into steps.\\

\textbf{Step 1:} \textit{With the usual notation  $K_P^2 =  p_g-2$ or $K^2_P= p_g-3.$}

From  Proposition \ref{numeros2} $h^0(2K_W+L)=1$ and also
$0\leq (3K_P + \bar{B})(2K_P + \bar{L})= K_P^2 + 3 - p_g,$
therefore $K^2_P\geq p_g-3$. On the other hand, by the Index theorem $K_P^2(\bar{B}/2)^2\leq (K_P(\bar{B}/2))^2$ and hence, from Lemma \ref{lem_t=2}, we get $K_P^2\leq \frac{(p_g-1)^2}{p_g}$ and the assertion follows.

\


\textbf{Step 2:}
\textit{If $4K_P+ \bar{B}$ is not nef, there exists a birational morphism $p_1:P \to P_1$ such the divisor $4K_{P_1} + B_{P_1}$ is nef.}

From Proposition \ref{numeros2} and Step 1, $h^0(4K_P + \bar{B}) > 0$. So, if $4K_P + \bar{B}$ is not nef, there is an irreducible curve $E_1$ such that $E_1(4K_P + \bar{B})< 0$ and so as in Remark \ref{inv_t=0}, we can see that  $E_1$ is a $(-1)$-curve with $E_1(3K_P+\bar{B})=0$
 and so $E_1\bar{B}=3$. Since $\bar{B}'$ is an even divisor, it is clear that $E_1(C_1+C_2)>0$ and it is an odd number. Since $(E_1 + C_1 + C_2)(3K_P+\bar{B})=0$, $(E_1+C_1 +C_2)^2< 0$, by the Index theorem. As $(E_1+C_1+C_2)^2=-1 + 2E_1(C_1+C_2)+ -4$ the only possibility is $E(C_1+C_2)=1$.
  If, say, $E_1C_1=1$, when $E_1$ is contracted the image of $C_1$ is a $(-1)$-curve that is in the branch locus and intersects the image of $\bar{B}$ at a triple point. So at this point, the image of $4K_P+\bar{B}$ is not nef any more. Therefore, we have to contract the image of $C_1$ also, so the inductive step consists in contracting twice. If necessary, we can repeat the same argument for another $(-1)$-curve $E_2$ with $E_2C_2=1$, obtaining the result.

\

To continue with the proof, we analyse the two values of $K_P^2$.

\

\textbf{Step 3:}
\textit{If $K_P^2=p_g-3$, then $S$ is the minimal resolution of a double cover of a weak Del Pezzo surface $T$ of degree $p_g+1$ branched on a divisor in $|-4K_T|$ having two (3,3)-points.}

Let $p_1:P \to P_1$ be the birational morphism such that  $4K_{P_1} + B_{P_1}$ is nef.  If $s$  is the number of $(-1)$-curves contracted by $p_1$, from Lemma \ref{lem_t=2},  $(4K_P + \bar{B})^2=-4$ and one has $s\geq 4$. Besides, note that $K_{P_1}^2=K_P^2+s$ and $(B_{P_1}/2)^2=(\bar{B}/2)^2+\frac{9}{4}s$ by Lemma \ref{lem_t=2}, so
\begin{equation}\label{eq_step_3}
0\leq (2K_{P_1}+B_{P_1})(4K_{P_1}+B_{P_1})=4-s
\end{equation}
hence, $s=4$; since $4K_{P_1}+B_{P_1}$ is an effective divisor and by the Index theorem we have that $2K_{P_1} + B_{P_1}/2$ is a trivial divisor. Therefore $-K_{P_1}=K_{P_1} + B_{P_1}/2$ gives $-K_{P_1}$ nef and big, so  $P_1$ is a weak Del Pezzo surface of degree $K_{P_1}^2=p_g +1$.
Finally, let us analyse the image of $C_1$ and $C_2$ in $P_1$. As we have seen, $C_1$ and $C_2$ are $(-2)$-curves in $P$, however, if $E_1$ is a $(-1)$-curve contracted by $p_1$, by Step 2, we can suppose that $E_1C_1=1$ and $C_1$ becomes a $(-1)$-curve, whose intersection with the image of $\bar{B}$ is equal to 3 and it will be contracted as well. Since $s=4$, we can repeat the same argument for another $(-1)$-curve $E_2$ with $E_2C_2=1$, obtaining the two singular $(3,3)$-points.

\

\begin{remark}
{\rm Notice that  Theorem 4.2 of \cite{ba-pig} gives the same result as in Step 3 for the case $p_g=4$.}
\end{remark}

\textbf{Step 4:}
\textit{If $K_P^2=p_g-2$, then $|4K_P+\bar{B}|$ is a rational pencil without base points.}

Keeping the same notation as in the proof of Step 3, and using the Index theorem we obtain that $K_{P_1}^2(B_{P_1}/2)^2\leq (K_{P_1}(B_{P_1}/2))^2$, which implies $s\leq \frac{4}{p_g+2}$ by Lemma \ref{lem_t=2}; hence $s=0$ and we conclude that $4K_P+\bar{B}$ is nef. From Proposition \ref{numeros2}, we see that $(4K_P+\bar{B})^2=0$; besides, from Lemma \ref{lem_t=2}, we have $K_P(4K_P+\bar{B})=-2$. So, applying Lemma \ref{lem-marga}, the result follows.

\

\textbf{Step 5:}
\textit{If $K_P^2=p_g-2$, then $S$ is the minimal resolution of a double cover of $\mathbb{F}_r$, $r\leq 2$,
    whose branch curve is the union of a curve in $|8C_0 + (9+4r)f)|$ with a fibre. The curve has $8-p_g$ singular points of multiplicity 4 and another of type $(4,4)$ and the fibre is tangent to the curve at the $(4,4)$-point.}

From Step 4,
$|4K_P + \bar{B}|$ is a genus 0 pencil without base points, hence $P\neq \pp^2$ and contracting $10-p_g$  exceptional curves, we get a birational morphism
$\gamma: P \to \mathbb{F}_r$.

Let $f$ be a fibre of $\mathbb{F}_r$ and $C_0$ a section with $C_0^2=-r$. Write $\bar{B}=\gamma^*(aC_o +bf)-\sum c_iE_i$, since
$$4K_P+\bar{B}=\gamma^*((a-8)C_0 + [b-4(2+r)]f)+\sum (4-c_i)E_i=\gamma^*(f)$$

we obtain $\bar{B}=\gamma^*(8C_0 + (9+4r)f) - 4\sum E_i$.

Also, from $0\leq \gamma^*(C_0)(2K_P+\bar{B})$, we have $r\leq 2$.

 By Proposition \ref{prop_minimalmodel}, we know that $C_1$ and $C_2$ are $(-2)$-curves on $P$ and hence $C_i(4K_P + \bar{B})=0$, so they are contained in the fibres. More precisely, since $C_i(2K_P+\bar{L})=-1$, we can write $2K_P+\bar{L}=D + C_1 + C_2$ where $D$ is an effective divisor with $h^0(D)=1$. Hence, $4K_P + \bar{B}= 2D + C_1 + C_2$, so $C_1$ and $C_2$ are in the same fibre. By easy calculations we have $D^2=-1$ and $K_PD=-1$. Then it is easy to see that contracting $D$ and then, say, the image of $C_1$, we obtain a singularity of multiplicity $(4,4)$ of the image of $\bar{B}$, such that the fibre passing through this point is contained in the branch locus and it is tangent to $\bar{B}$ at the $(4,4)$-point. Finally, since there are $10-p_g$ singular points of multiplicity 4, then $p_g\leq 8$.
\end{proof}

To end this section we make several remarks.
\begin{remark}
{\rm If $S$ is a surface as in Step 5,
 we can  proceed as in the proof of Corollary \ref{cor_t=0}. First, by Step 5 we have $r\leq 2$, so we can suppose that $r=0,2$. If $r=0$, there exists a birational map $\mathbb{F}_0 \dashrightarrow \mathbb{P}^2$ such that we can see the image of the branch $B'$ on $\pp^2$ as a curve of degree $14$ of type $C +  l$, where $l$ is a line and $C$ is a curve of degree 13 having two (distinct) points $P_1$ and $P_2$ of multiplicity $5$ and $(4,4)$ respectively at the intersection with $l$, plus $8-p_g$ points of multiplicity $4$, and no further essential singularities. For the case $r=2$ it is easily seen that the infinity section $C_0$ cannot contain the $(4,4)$-point. Then with the same procedure the case $r=2$ can be expressed as  a degeneration of the case $r=0$. The degeneration consists of having $P_1$ infinitely near to $P_2$. Note also that $l$ is tangent to $C$ at $P_2$.}
 \end{remark}


\

\section{Surfaces of type $(IIc)$}

\

Finally, we are going to study surfaces with $K_S^2=2p_g$, $q=0$ and $p_g\geq 5$, such that the canonical map factors through an involution with $t=4$.

For these surfaces  we have that $h^0(2K_W+L)=0$. Thus the bicanonical map of $S$ is composed with $i$, hence not birational.   Since  by hypothesis $K_S^2\geq 10$, using (\cite{re}, Proposition 3), $S$ has a pencil of curves of genus $2$, necessarily rational because $q=0$.
We remark that the existence of the genus $2$ pencil can be also checked directly by considering the linear system $|3K_W+B|$.


\begin{remark}
{\rm It is easy to see that the rational pencil of curves of genus 2 is unique. Otherwise, let $|G_1|$ and $|G_2|$ be two pencils of genus 2 without base points; since $G_1G_2\geq 2$, $(G_1+G_2)^2\geq 4$. Since $K_S(G_1+G_2)=4$, we obtain $K_S^2(G_1+G_2)^2-(K_S(G_1+G_2))^2 >0$, a contradiction to  the Index theorem, because $K_S^2\geq 10$.}
\end{remark}

\begin{remark}\label{rem-horikawa2}
{\rm Recall that if a surface has a pencil of genus $2$, there exists a map of degree $2$
onto a ruled surface, mapping each genus 2 fibre by its canonical map onto a fibre of the ruling. Horikawa in \cite{ho2} proved that with elementary transformations it is possible to obtain a minimal model whose branch locus
has only singularities of the following types:
$(0)$, $(I_k)$, $(II_k)$, $(III_k)$, $(IV_k)$ and $(V)$ (in Horikawa's notation).

In what follows, we will say that a rational fibre of Horikawa's model is {\it a singular fibre of type} $(I_k)$, $(II_k)$, $(III_k)$, $(IV_k)$ or $(V)$ if the branch locus has the corresponding singularity or singularities on the fibre.
We will also say that a genus 2 fibre is {\it of type} $(I_k)$, $(II_k)$, $(III_k)$, $(IV_k)$ or $(V)$ if its corresponding rational fibre is of this type.
}
\end{remark}

The next result is well known but for completeness we include its proof.
\begin{lem}\label{lem-genus2}
Let $S$ be a minimal algebraic surface of general type  with $ p_g\geq 4$, $q=0$  and canonical map not composed with a pencil. If $S$ has a pencil of genus $2$, then the canonical map has degree $2$.
\end{lem}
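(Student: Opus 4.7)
The plan is to study the restriction of $\phi_{K_S}$ to a general fibre $F$ of the genus $2$ pencil. After blowing up the base points of the pencil (which does not alter the canonical map), I have a morphism $\pi\colon S\to \mathbb{P}^1$ with $F^2=0$; by adjunction $K_S|_F = K_F$, so the restriction map $H^0(S,K_S)\to H^0(F,K_F)$ lands in a space of dimension $h^0(K_F)=2$. If this map had rank $\leq 1$, then $\phi_{K_S}$ would be constant on the general fibre and hence composed with the pencil $|F|$, against the hypothesis. Therefore the restriction is surjective and $\phi_{K_S}|_F$ coincides with the canonical map of $F$, a degree $2$ map onto a line in $\mathbb{P}^{p_g-1}$. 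In particular $\deg \phi_{K_S} \geq 2$.

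For the reverse inequality I would appeal to the relative canonical construction for the genus $2$ fibration (Horikawa's theory, recalled in Remark \ref{rem-horikawa2}): the hyperelliptic involutions on the smooth fibres glue to a biregular involution $\sigma$ of $S$, and the quotient yields a degree $2$ morphism $\psi\colon S \to X$ onto a ruled surface $X\to \mathbb{P}^1$. The previous paragraph shows that the fibres of $\psi$ are contracted by $\phi_{K_S}$, so $\phi_{K_S}$ factors as $\phi_{K_S}=\mu\circ\psi$ for a rational map $\mu\colon X\dashrightarrow\Sigma$.

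It remains to check that $\mu$ is birational. A general ruling of $X$ is the image $F/\sigma_F \cong \mathbb{P}^1$ of a smooth fibre $F$ under $\psi$, and $\mu$ sends it onto the line $\phi_{K_S}(F)\subset\Sigma$. Since the composition $F \to F/\sigma_F \to \phi_{K_S}(F)$ equals $\phi_{K_S}|_F$ and has degree $2$ while $\psi|_F$ already has degree $2$, the induced map of $\mu$ on the ruling has degree $1$. Thus $\mu$ is birational on a covering family of rational curves, hence birational on $X$, and $\deg\phi_{K_S}=\deg\psi\cdot\deg\mu=2$.

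The only delicate point is the global extension of the pointwise hyperelliptic involutions, but this is precisely Horikawa's construction in \cite{ho2} and so I would simply cite it rather than reproduce it.
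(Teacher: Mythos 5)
Your first two paragraphs are sound and run parallel to the paper's own argument: since $K_S|_G\simeq\omega_G$ is the $g^1_2$ on a general fibre $G$, the restriction of $\phi_{K_S}$ to $G$ is the canonical map of $G$, of degree $2$ onto a line, so $\phi_{K_S}$ factors through the quotient by the (global) hyperelliptic involution and has even degree. The problem is the last step. The inference ``$\mu$ is birational on every member of a covering family of rational curves, hence birational on $X$'' is not valid. For instance, the map $\mathbb{P}^1\times\mathbb{P}^1\to\mathbb{P}^1\times\mathbb{P}^1$, $(x,y)\mapsto(x^2,y)$, restricts to an isomorphism of every ruling $\{x\}\times\mathbb{P}^1$ onto a line, yet has degree $2$, because the rulings over $x$ and $-x$ have the same image. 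This is exactly the degeneration you must exclude here: a priori two distinct genus-$2$ fibres $G_{t_1}\neq G_{t_2}$ could be mapped by $\phi_{K_S}$ onto the same line of $\Sigma$, in which case a general point of $\Sigma$ would have two preimages on each of them and $\deg\phi_{K_S}\geq 4$. A telltale symptom of the gap is that your argument nowhere uses the hypothesis $p_g\geq 4$.

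What is missing is the fact that $\phi_{K_S}$ \emph{separates the fibres} of the pencil, i.e.\ that the pencil map $S\to\mathbb{P}^1$ factors through $\phi_{K_S}$, so that a general point of $\Sigma$ lies on the image of exactly one fibre. This is precisely where the paper uses $p_g\geq 4$: from the restriction sequence
\begin{equation*}
0\to\OO_S(K_S-G)\to\OO_S(K_S)\to\OO_S(K_S)|_G\to 0
\end{equation*}
and $h^0(G,\omega_G)=2$ one gets $h^0(S,K_S-G)\geq p_g-2\geq 2$; multiplying sections of $K_S-G$ by the two sections spanning the pencil exhibits the rational function defining the pencil map as a ratio of sections of $K_S$, hence as a pullback from $\Sigma$, so distinct general fibres have distinct image lines. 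Combined with your first paragraph this yields $\deg\phi_{K_S}=2$. Your structural detour through Horikawa's double cover $\psi\colon S\to X$ is fine but unnecessary once this separation statement is in hand; without it, the proof is incomplete.
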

\begin{proof}
Let $|G|$ be the genus 2 pencil on $S$ and note that $K_S|_G\simeq \omega_G$. Since   $\omega_G$ is a $g^1_2$ on $G$,
$\phi_{K_S}$ has even degree.

Since $h^0(S,K_S)\geq 4$ and $h^0(G,\omega_G)=2$, from the long exact sequence:
$$ 0 \to \mathcal{O}_S(K_S-G) \to \mathcal{O}_S(K_S) \to \mathcal{O}_S(K_S)\mid_G \to 0,$$
we have that $h^0(S,K_S-G)\geq 2$. Therefore, $\phi_{K_S}$ separates the fibres and we obtain the result.
\end{proof}

\begin{prop}\label{prop-genus2} The pencil of genus $2$ in $S$ is the pull-back of a ruling of  the canonical image $\Sigma$ of $S$.
Moreover, the essential singularities of the branch locus in Horikawa's model are of type: $(I_k)$, $(II_k)$, $(III_k)$, $(IV_k)$, with $k=1,2$, and $(V)$.
\end{prop}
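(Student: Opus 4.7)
The plan is to split the proof into two parts. First I would prove that the pencil comes from a ruling of~$\Sigma$. For a general fibre $G$ of the pencil, adjunction yields $K_S\cdot G=2$, and $K_S|_G\simeq\omega_G$ is the unique $g^1_2$ on the genus~$2$ curve~$G$. Hence $\phi_{K_S}|_G$ is its hyperelliptic map, and the image $\phi_{K_S}(G)$ has degree $K_S\cdot G/\deg(\phi_{K_S}|_G)=1$ in $\pp^{p_g-1}$, so it is a line. Since a smooth genus~$2$ curve has a unique hyperelliptic involution, $i|_G$ must realise it, and consequently the pencil $|G|$ descends to a pencil of lines on~$\Sigma$. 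Two distinct general fibres satisfy $G_1\cdot G_2=0$, so the two corresponding lines cannot share any point (else its preimage would be contained in $G_1\cap G_2=\emptyset$). The pencil of lines is therefore a family of pairwise disjoint lines, i.e.\ a ruling of~$\Sigma$.

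Second, I would pass to Horikawa's model. Because $i$ restricts to the hyperelliptic involution on each smooth fibre, it globalises to the hyperelliptic involution of the whole fibration, and the associated double cover $S\to S/i$ agrees, after the usual elementary transformations of Hirzebruch surfaces on the base, with Horikawa's relative double cover from~\cite{ho2}. His classification then forces the essential singularities of the branch curve to lie within the listed types $(0),(I_k),(II_k),(III_k),(IV_k),(V)$.

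It remains to bound $k\leq 2$ in each of the four infinite families. My approach here is numerical: each essential singularity of a given type $(J_k)$ contributes a specific non--negative amount to a global invariant of the fibration controlled by $K_S^2$ and $\chi$ (of the same kind as the quantities appearing in the computations of Proposition~\ref{numeros2}). Our hypothesis $K_S^2=2\chi-2$ keeps the total budget very small, and since Horikawa's contributions strictly grow with $k$, a single type--$(J_k)$ singularity with $k\geq 3$ would already overshoot the bound.

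The main obstacle is the precise identification of our double cover model (chosen so that $3K_W+B$ is nef, cf.\ Proposition~\ref{prop_minimalmodel}) with Horikawa's model (chosen so that the branch curve behaves well over each fibre of the ruling). The elementary transformations relating the two must be tracked so that no essential singularity is lost or gained and so that Horikawa's numerical contributions apply verbatim. Once this matching is in place, both the restriction to the listed types and the bound $k\leq 2$ follow by direct numerical comparison.
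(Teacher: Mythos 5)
Your proposal follows essentially the same route as the paper: the restriction $K_S|_G\simeq\omega_G$ shows each general fibre maps two-to-one onto a line, giving the ruling, and the bound $k\leq 2$ comes from Horikawa's numerical formula (Theorem 3 of \cite{ho2}), which under $K_S^2=2\chi-2$ reads $4=\sum_k\{(2k-1)(\nu(I_k)+\nu(III_k))+2k(\nu(II_k)+\nu(IV_k))\}+\nu(V)$, so any singularity with $k\geq 3$ overshoots. The ``main obstacle'' you flag is not actually needed: Horikawa's theorem applies to any minimal surface with a genus-$2$ pencil, independently of the particular resolution of the canonical involution's quotient, so no matching of models is required.
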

{\bf Proof:}
Since $K_S|_G=\omega _G$ and  $|K_S|$ is not composed with a pencil, the image of a general element of $|G|$
 is a line, and so a ruling of $\Sigma$.
Using (\cite{ho2}, Theorem 3) we get
\begin{equation}\label{eq-horikawa}
4= \sum_k\{(2k-1)(\nu(I_k) + \nu(III_k)) + 2k(\nu(II_k)+\nu(IV_k))\} + \nu(V),
\end{equation}
where $\nu(*)$ denotes the number of singularities of type $(*)$. So we immediately obtain that $k=1,2$. \Qed

\begin{remark}\label{horikawa}
{\rm
Looking carefully at the resolution of the singular fibres as in Proposition \ref{prop-genus2}, we obtain that:
\begin{enumerate}
\item each singular fibre of type $(I_1)$ or $(III_1)$ or $(V)$ corresponds to one base point of $|K_S|$ and one isolated fixed point of the involution;
\item each singular fibre  of type $(I_2)$ or $(III_2)$ corresponds to a fixed component plus one base point of $|K_S|$, and three isolated fixed points of the involution;
\item  each singular fibre of type $(II_1)$ or $(IV_1)$ corresponds to a fixed component of $|K_S|$, and there are two isolated fixed points of the involution;
\item finally, each singular fibre of type $(II_2)$ or $(IV_2)$ corresponds to a fixed component plus two base points of $|K_S|$, and four isolated fixed points of the involution.
\end{enumerate}
We point out that all the fixed components of $|K_S|$  are $(-2)$-curves.}
\end{remark}

Using double or bidouble covers, it is not difficult to find examples, for instance,

\begin{ex}
In $\mathbb{F}_0$, let $f_1$ and $f_2$ denote general fibres of each ruling.  Consider the bidouble cover $\pi: S \to \mathbb{F}_0$ with smooth branch curves $D_1\in |f_1+f_2|, D_2\in |f_1 + 3f_2|$ and $D_3\in |3f_1+(2a+1)f_2|$, $a\geq 2$.  Using the bidouble cover formulas (see \cite{cata}, \cite{pa}), the surface $S$ has the invariants $K^2=4a+2$, $p_g=2a+1$ and $q=0$. Now, we analyse the double cover as a composition of two double covers. First, the double cover with branch curve $D_1 + D_2$, this is $Y \to \mathbb{F}_0$, where $Y$ is a rational surface with four $(-2)$-curves coming from the intersection points of $D_1$ and $D_2$. The linear system $|\pi^*f_2|$ is the pencil of genus $2$, whose fibres  in the general case will be of type $(I_1)$.  It is easy to see that a mild degeneration of the construction will yield   fibres of type  $(III_1)$ or $V$.
\end{ex}

\

\


\begin{thebibliography}{ABC}

\bibitem{acgh} E. Arbarello, M. Cornalba, P.A. Griffiths, J. Harris, {\em Geometry of algebraic curves}, Volume I, Grundlehren der
Mathematischen Wissenschaften, {\bf 267}, Berlin Heidelberg New York: Springer 1985.

\bibitem{a-kon} T. Ashikaga, K. Konno, {\em Algebraic surfaces of general type with $c_1^2=3p_g-7$},
Tohoku Math. J., {\bf 42}, 517--536 (1990).

\bibitem{bpv} W. Barth, C. Peters, A. Van de Ven, {\em Compact
complex surfaces}, Ergebnisse der Mathematik und ihrer Grenzgebiete.
3. Folge. Band 4. Berlin etc.: Springer-Verlag. X, 304 p.(1984).

\bibitem{ba-pig} I. C. Bauer, R. Pignatelli, {\em Surfaces with $K^2=8,
p_g=4$ and canonical involution}, Osaka J. Math, {\bf 46}, 799-820 (2009).

\bibitem{beauville1} A. Beauville, {\em Complex algebraic
surfaces}, London Mathematical Society Lecture Note Series, 68,
Cambridge University Press (1983).

\bibitem{beauville2} A. Beauville, {\em L'application canonique pour
les surfaces de type g\'en\'eral}, Inv. Math., {\bf 55}, 121--140 (1979).

\bibitem{bom} E. Bombieri, {\em Canonical models of surfaces of general type},
Publ. Math. Inst. Hautes \'Etud. Sci., {\bf 42}, 171--219 (1972).

\bibitem{ca-ci-men} A. Calabri, C. Ciliberto, M. Mendes
Lopes, {\em Numerical Godeaux surfaces with an involution}, Transactions of the AMS, {\bf 359}, no. 4, 1605--1632 (2007).

\bibitem{cata} F. Catanese, {\em Singular bidouble covers and the
construction of interesting algebraic surfaces}, Hirzebruch 70 (Warsaw, 1998), 97–120,
Contemp. Math., {\bf 241}, Amer. Math. Soc., Providence, RI (1999).

\bibitem{ccg} L. Chiantini, C. Ciliberto, V. Di Gennaro, {\em The genus of
projective
curves},  Duke Math. J.  {\bf 70},  no. 2, 229--245 (1993).


\bibitem{ci} C. Ciliberto, {\em Canonical surfaces with $p_g = p_a = 5$ and $K^2 = 10$},
Ann. Scuola Norm. Sup. Pisa Cl. Sci. (4), {\bf 9}, no. 2, 287--336 (1982).

\bibitem{cmtorsion} C.~Ciliberto, M.~Mendes Lopes, {\em On surfaces of general type with $K^2=2\chi -2$ and nontrivial  torsion},
  Geometria Dedicata {\bf 66}, no. 3,  313--329 (1997).

\bibitem{debarre} O. Debarre, {\em In\'egalit\'es num\'eriques pour
les
surfaces de type g\'en\'eral}, with an appendix by A. Beauville, Bull.
Soc. Math. France {bf 110} 3,  319--346 (1982).


\bibitem{har} R. Hartshorne, {\em Algebraic geometry},  vol 52, Springer-Verlag (1983).

\bibitem{ho2}
E. Horikawa, {\em On Algebraic surfaces with pencils of curves of genus 2},
Complex analysis and algebraic geometry, pp. 79–-90. Iwanami Shoten, Tokyo (1977).

\bibitem{hoI}{\em Algebraic surfaces of general type with small $c_1^2$. I}, Ann. of Math. (2)  {\bf 104}, no. 2, 357--387 (1976).

\bibitem{ho4}
E. Horikawa, {\em Algebraic surfaces of general type with small $c_1^2$. II},
Invent. Math., {\bf 37}, 121--155 (1976).


\bibitem{ho1}
E. Horikawa, {\em Algebraic surfaces of general type with small $c_1^2$. III},
Invent. Math., {\bf 47}, 209--248 (1978).
\bibitem{hoIV}
E. Horikawa, {\em Algebraic surfaces of general type with small $c\sp{2}\sb{1}$. IV},  Invent. Math.  {\bf  50},  no. 2, 103--128 (1978/79).
\bibitem{ho3}
E. Horikawa, {\em Algebraic surfaces of general type with small $c_1^2$ V},
J. Fac. Sci. Univ. Tokyo Sect. IA Math., {\bf 28}, no. 3, 745-755 (1981).


\bibitem{konno} K. Konno, {\em Algebraic surfaces of general type with $c_1^2=3p_g-6$}, Math. Ann., {\bf 290}, 77--107 (1991).

\bibitem{men-par}
M. Mendes Lopes, R. Pardini {\em Triple canonical surfaces of minimal degree},
International Journal of Mathematics, Volume 11, No.4, 553--578 (2000).

\bibitem{men-par2} M.~Mendes Lopes, R.~Pardini, {\em The bicanonical
map of surfaces with
$p_g=0$ and $K^2\ge 7$, II},  Bull. London Math. Soc.,  {\bf 35},  no.
3, 337--343  (2003).

\bibitem{mi} R. Miranda, {\em On canonical surfaces of general type with $K^2=3\mathcal{X}-10$},
Math. Z., {\bf 198}, 83--93 (1988).

\bibitem{pa} R.~Pardini, {\em Abelian covers of
algebraic varieties}, J. reine angew. Math. {\bf 417}, 191--213
(1991).

\bibitem{reid0} M. Reid,  {\em Chapters on algebraic surfaces}, Complex algebraic geometry (Park City, UT, 1993),  3--159,
IAS/Park City Math. Ser., {\bf 3}, Amer. Math. Soc., Providence, RI (1997).

\bibitem{re}
I. Reider, {\em Vector bundles of rank 2 and linear systems on algebraic surfaces},
Ann. of Math., {\bf 127}, 309--316 (1988).

\bibitem{ri1} C. Rito, {\em On surfaces with $p_g=q=1$ and non-ruled bicanonical involution}, Ann. Scuola Norm. Sup. Pisa Cl. Sci. (5), Vol. VI, 81--102 (2007).

\bibitem{Xian} L. Xianfang, {\em Algebraic surfaces of general type with $K^2=2p_g-1$ and $p_g\geq 5$},  Acta Mathematica Sinica, New Series, Vol.12, No.3, pp. 234Ð243
(1996).


\end{thebibliography}
\end{document}